\tikzset{
  process/.style = {
    rectangle, rounded corners, minimum width=3.5cm, minimum height=1.2cm,
    text centered, draw=black, font=\normalsize, text width=7cm, align=center
  },
  arrow/.style = {
    thick,->,>=stealth
  }
}
\newcommand\restr[2]{{
  \bigg.\kern-\nulldelimiterspace 
  #1 
  \vphantom{\big|} 
  \bigg|_{#2} 
  }}
\newtheorem{theorem}{Theorem}[section]
\newtheorem{lemma}[theorem]{Lemma}
\newtheorem{corollary}[theorem]{Corollary}
\newtheorem{definition}[theorem]{Definition}
\newtheorem{assumption}{Assumption}
\theoremstyle{remark}
\newtheorem{fact}{Fact}
\newcounter{note}[section]
\newcommand{\anote}[1]{\refstepcounter{note}{\bf\color{teal} April:}
  {\sf \color{teal}  #1}}
\newcommand{\norm}[1]{\left|\left| #1 \right|\right|}
\newcommand{\By}[1]{\mathbf{y}}
\DeclareMathOperator*{\argmin}{arg\,min}
\DeclareMathOperator*{\diag}{diag}
\title{Finding a Multiple Follower Stackelberg Equilibrium: \\A Fully First-Order Method}
\author[1]{April Niu}
\author[2]{Kai Wang}
\author[1]{Juba Ziani}
\affil[1]{School of Industrial and Systems Engineering, Georgia Tech}
\affil[2]{School of Computational Science and Engineering, Georgia Tech}
\begin{document}

\maketitle

\begin{abstract}
This paper studies Stackelberg games with multiple followers and continuous strategy spaces, where a single leader first commits to a strategy, then $k$ followers ($k>1$) play a simultaneous game in response to the leader's decision. 
We study the complexity of finding \emph{$\epsilon$-stationary Stackelberg equilibria}, where neither the leader nor the followers want to deviate from the current strategies with gradient norm greater than $\epsilon$. 
In this work, we propose the first fully first‑order method to compute a $\epsilon$-stationary Stackelberg equilibrium with convergence guarantees. 
To achieve this, we first reframe the leader–follower interaction as single‑level constrained optimization. 
Second, we define the Lagrangian and show that it can approximate the leader's gradient in response to the equilibrium reached by followers with only first-order gradient evaluations.
These findings suggest a \emph{fully first-order} algorithm that alternates between (i) approximating followers’ best responses through gradient descent and (ii) updating the leader’s strategy via approximating the gradient using Lagrangian. 
Under standard smoothness and strong monotonicity assumptions on the followers’ sub-game, we prove that the algorithm converges to an $\epsilon$‑stationary Stackelberg equilibrium in $O(k^2\epsilon^{-6-\alpha})$ gradient evaluations, where $\alpha>0$ is arbitrarily small. Our method dispenses with any Hessian or matrix‐inverse computations, is scalable to high‑dimensional settings, and provides the first fully first‑order convergence guarantees for multi‑follower Stackelberg games.
\end{abstract}


\section{Introduction}
Stackelberg games model hierarchical interactions where a leader first commits to a decision, anticipating how one or more followers will respond optimally. This structure captures a wide range of real-world systems, including pricing in markets, security resource allocation, and multi-agent learning in strategic settings \citep{
an2017stackelberg,gerstgrasser2023oracles,li2017review}. In Stackelberg games, one agent influences the outcome by acting first, in contrast to simultaneous games where every player chooses their strategy simultaneously. The game’s equilibrium, known as a Stackelberg equilibrium \cite{Bazin2020}, formalizes this anticipatory behavior and has become a fundamental concept in game theory and optimization. In recent years, there has been a surge of interest in computational methods for Stackelberg games, especially in continuous action spaces \citep{fiez2019convergence,mertikopoulos2019learning}, due to their relevance in learning, control, and multi-agent reinforcement learning \citep{groot2017hierarchical,zhang2021multi}.

A common formulation of the Stackelberg equilibrium problem is through bilevel optimization \cite{Bazin2020}: the leader’s objective depends on the followers' equilibrium outcome of a lower-level game played among the followers. This nested structure poses significant computational challenges: the leader must optimize over a followers' equilibrium set that is often not available in closed form and may be sensitive to perturbations. To address this, recent works have proposed alternative formulations such as penalized or regularized versions (e.g., \citep{ji2021bilevel, KKWN23}) that render the problem more tractable by transforming it into a single-level problem. Our work builds on this line of research by introducing a novel Lagrangian-based reformulation for multiple followers, enabling \emph{fully first-order optimization} without requiring exact best-response computations from the followers.

Despite recent advances, existing approaches for computing Stackelberg equilibria often suffer from key limitations that hinder scalability. Many \citep{amos2017optnet,agrawal2019differentiable,WXPRT22} rely on implicit differentiation through the followers’ equilibrium map, which requires computing or inverting large Hessian matrices---a process that is computationally expensive and memory-intensive, especially in high dimensions. Other \citep{yu21sample} assume access to the exact best responses from the followers, which may not be realistic in practice when the lower-level game is non-trivial to solve. These limitations have made prior methods either infeasible for large-scale problems in the multi-follower setting. Our approach circumvents both issues by leveraging a Lagrangian-based reformulation and using only first-order gradient methods throughout, enabling efficient and scalable computation even in complex multi-follower settings.


\subsection{Our Contributions}
This work addresses the problem of computing Stackelberg equilibria in multi-follower games by leveraging a bilevel optimization framework. Our contributions are as follows:

\begin{itemize}[leftmargin=*]
    \item First, we reformulate the Stackelberg game with multiple followers as a bilevel program and then reduce it to a single-level constrained optimization problem via a Lagrangian penalty formulation. This avoids differentiating through the equilibrium map while retaining the hierarchical structure.
    \item Second, we propose a fully first-order algorithm that alternates between approximating the followers’ equilibrium and updating the leader’s strategy. Importantly, the method relies only on first-order gradients and avoids second-order computations, making it scalable. To the best of our knowledge, we are the first to apply this framework to the multi-follower case. 
    \item Third, we establish convergence guarantees under smoothness and monotonicity assumptions. Our algorithm converges to an $\epsilon$-stationary point of the Stackelberg equilibrium at a rate of $O(k^2\epsilon^{-6-\alpha})$ with precise bounds on both the outer and inner iterations.
\end{itemize}


\subsection{Related Work}
We summarize the related work in: (i) Stackelberg games with single and multiple followers, (ii) bilevel optimization methods, and (iii) first-order algorithms for smooth and monotone games.

\paragraph{Smooth Monotone Games} 
The smoothness and monotonicity of followers’ games ensure existence, uniqueness, and stability of Nash equilibria, enabling analysis via variational inequalities. 
\cite{lin2020finite} establish the first finite-time guarantees for last-iterate convergence, showing that no-regret dynamics converge (rather than just average) to Nash equilibria in monotone games; \cite{GPD} sharpen the result and provide a tight $O(1/\sqrt{T})$ last-iterate rates. 
Recent work has advanced learning dynamics in this setting: \cite{Gao22} establish exponential convergence for continuous-time dynamics; and \cite{tatarenko2019learning} design distributed algorithms that converge under general monotonicity without cost-function knowledge. The most recent result that we are aware is from \cite{Cai23}, where they propose an optimistic accelerated gradient method that achieves $O(1/T)$ last-iterate convergence. We use the result from \cite{Cai23} as a black-box for smooth monotone games.

\paragraph{Stackelberg Games} 
Stackelberg games capture hierarchical leader–follower interactions, with single-follower cases well studied in zero-sum settings \cite{NEURIPS2022_Denizalp,DenizalpNEURIPS2023}. Gradient-based methods (\cite{fiez2020implicit,Jain11}) use implicit optimality conditions, but complexity grows in multi-follower games requiring Nash equilibria at the lower level. \cite{li2022solvingstructuredhierarchicalgames} extend to structured hierarchical games with multiple followers, solving approximate equilibria via back-propagation, while \cite{WXPRT22} similarly apply gradient descent with KKT-based differentiation. Both approaches rely on second-order information, whereas other works (\cite{bacsar,xu2018}) study discrete strategy spaces.

\paragraph{Bilevel Optimization} Bilevel optimization refers to problems where one optimization task (the upper level) is constrained by the solution set of another optimization task (the lower level), making it a natural lens for modeling Stackelberg-type interactions \cite{}. 

The bilevel perspective has become central in understanding and solving Stackelberg-type problems. \citet{zhang2023introductionbileveloptimizationfoundations} provides a comprehensive overview of this technique from both theoretical and practical perspective. Non-first-order bilevel optimization methods often require implicit differentiation and Hessian-based techniques. For example, \cite{ghadimi2018approximationmethodsbilevelprogramming} design bilevel algorithms that exploit Hessian information of the lower-level to give the first finite-sample complexity guarantees; \cite{ji2021bilevel} efine analysis of implicit/iterative differentiation and propose stocBiO, a stochastic method with efficient Jacobian/Hessian–vector products; \cite{xiao23} extend these ideas to equality-constrained settings with projection-efficient implicit SGD variants achieving near-optimal complexity.

Recent works develop fully first-order bilevel methods that avoid costly second-order information but focus only on single lower-level problems. Here we briefly summarize the convergence rates of first-order bilevel algorithms with \textit{single} lower-level optimization in prior work. 
Under the assumption of smooth bounded gradients/value of $f$ and $g$, \cite{Liu2022} consider the case where the lower-level has unique minimizer, and their algorithm converges to an \textit{$\epsilon$-stationary} point in $\Tilde{O}(\epsilon^{-4})$ iterations. 
\citep{KKWN23,Chen25,yang2023acceleratinginexacthypergradientdescent} further improve the convergence rate to $\Tilde{O}(\epsilon^{-3})$, $\Tilde{O}(\epsilon^{-2})$, and  $\Tilde{O}(\epsilon^{-1.75})$, respectively. Building on \cite{KKWN23}, \cite{maheshwari2024followeragnosticmethodsstackelberg} $O(\epsilon^-2)$ for a single-follower as a direct extension. In this work, we consider $k > 1$ players in the lower while simultaneously optimizing their objectives. 
\cite{LU2024} proposes a first-order quadratic penalty method for bilevel programs, proving convergence to \textit{KKT statioary points} with complexity $\Tilde{O}(\epsilon^{-4})$ without requiring exact Hessians. 

A crucial modeling ingredient in our setting is the strong monotonicity of the followers’ game, which ensures the uniqueness and stability of the lower-level equilibrium. The smoothness and (strongly) convex assumptions are all presented in the aforementioned work.

\section{Preliminaries}
In multi-agent optimization and game theory, Stackelberg games model hierarchical interactions where a \emph{leader} commits to a strategy first, then is followed by \emph{players} or \emph{followers} who respond optimally. These games have received increasing attention for their ability to capture real-world leader–follower dynamics in markets, learning systems, and robust control. A standard and powerful structural assumption is that the followers’ game is smooth and strongly monotone, which admits the uniqueness and stability of the equilibrium response. In particular, monotonicity of the gradient operator allows the use of variational inequality techniques, while smoothness enables efficient algorithmic approximation and stability under perturbations. These assumptions underpin many recent works on equilibria learning in multi-agent games, see \cite{Cai23,GPD,li2020end,tatarenko2019learning}. We build on these developments to propose a fully first-order algorithm for multi-follower Stackelberg games, using the smooth-monotone structure to ensure tractable analysis and convergence guarantees.

\subsection{Smooth Monotone Game}

\begin{definition}
    A multiplayer game is denoted by the tuple $\mathcal{G} = ([k],({Y}_i)_{i\in[k]}, (g_i)_{i\in[k]})$ where: $[k]$ is the set of players, $X_i \in \mathbb R^{n_i}$ is a convex and compact set from which player $i$ chooses their strategy, and $g_i: \mathcal{Y}\to \mathbb{R}$ is the cost function associated with each player such that it takes the input from the set $\mathcal{Y} = \prod_{i = 1}^k Y_i \in \mathbb R^N$ where $N= \sum_{i = 1}^k n_i$.
\end{definition}
Define the gradient operator $V: \mathcal{Y} \to \mathbb R^n$ as $V(\mathbf{y}):= (\nabla_{y_1}g_1(\mathbf y), \nabla_{y_2}g_2(\mathbf y),...,\nabla_{y_k}g_k(\mathbf y))$ where $\mathbf{y} = (y_{i}, y_{-i})$ such that $y_i$ is the strategy chosen by the player $i$ and $y_{-i}$ is the strategy of everyone else.

\begin{definition}[Smooth Monotone Game]
 We say a game is strongly monotone if the gradient operator is strongly monotone, i.e., there exists some $\mu> 0$ such that $\langle V(\mathbf y')- V(\mathbf y), \mathbf y' -\mathbf y\rangle\ge \mu \lVert \mathbf y' -\mathbf y\rVert^2$
    for all $\mathbf y,\mathbf y'\in \mathcal{X}$.
A game is smooth if the gradient operator is smooth with parameter $\mu$, i.e., there exists some $\ell>0$ such that $\langle V(\mathbf y')- V(\mathbf y), \mathbf y' -\mathbf y\rangle\le \ell \lVert \mathbf y' -\mathbf y\rVert^2$
    for all $\mathbf y',\mathbf y\in \mathcal{Y}$.   
\end{definition}

The Jacobian matrix ${DV}(\mathbf y)\in \mathbb{R}^{k\times k}$ of $V$ is defined to be the gradient of $V$:
\begin{equation}
\begin{pmatrix}
\nabla^2_{y_1y_1} g_1(\mathbf y) & \nabla^2_{y_2 y_1} g_1(\mathbf y) & \cdots & \nabla^2_{y_k y_1} g_1(\mathbf y) \\
\nabla^2_{y_1 y_2} g_2(\mathbf y) & \nabla^2_{y_2 y_2} g_2(\mathbf y) & \cdots & \nabla^2_{y_k y_2} g_2(\mathbf y) \\
\vdots & \vdots & \ddots & \vdots \\
\nabla^2_{y_1 y_k} g_k(\mathbf y) & \nabla^2_{y_2 y_k} g_k(\mathbf y) & \cdots & \nabla^2_{y_k y_k} g_k(\mathbf y)
\end{pmatrix}
\end{equation}
Standard variational analysis shows that if $V$ is $\mu$-strongly monotone and $\ell$-smooth, then $\lVert{DV(\mathbf y)}\rVert \le \ell$ and $\lVert DV(\mathbf y)^{-1}\rVert \le 1/\mu$ \citep{facchinei2003finite}, where $\lVert{\cdot}\rVert $ is the spectral norm. Combining with the result of \cite{Cai23} with the strong-monotonicity assumption (\ref{ass:mug})of $\mathcal G$, we obtain an $\epsilon$-gradient guarantee after $O(\mu_g^{-1} \epsilon^{-1})$ of the implicit iterations.

\subsection{Stackelberg Game}
    A Stackelberg game with one leader and $m$ followers can be seen as a two‐stage game where the leader has cost function $f: X\times \mathcal{Y} \to \mathbb R$ and the follower each has cost function $g_i: X \times \mathcal{Y} \to \mathbb R$. The Leader first publicly commits to a strategy $x\in X\subseteq \mathbb R^{n_0},$ then each follower $i= 1,..., k$ simultaneously chooses a strategy $y_i\in Y_i \subseteq \mathbb R^{n_i}, \quad \mathbf{y} = (y_1,..., y_k)\in \mathcal{Y}:= \prod_{i = 1}^k Y_i,$ so as to minimize their own cost function, yielding a simultaneous-move subgame among the followers. 

    Fixing leader's strategy $x$, we denote the followers' subgame by $\mathcal G (x)$. The followers' action $\mathbf y$ is a Nash equilibrium of the subgame $\mathcal G (x)$ if no follower has an incentive to deviate. That is, if we use $\text{NE}(\mathcal G(x))$ to denote the set of Nash equilibria, then $\mathbf y\in \mathcal Y$ satisfies $\forall i,\  g_i(x,y_{i},y_{-i} ) \le g_i((x,y'_{i},y_{-i})\  \forall y'_i\in Y_i.$
    
\begin{definition}[Stackelberg equilibrium]\label{def:stackelberg}
    A Stackelberg equilibrium is a pair $(x^*,\mathbf y^*)$ such that 
    $x^*\in \arg\min_{x\in \mathcal X} \{f(x,\mathbf y): y\in \text{NE}(\mathcal G(x))\}, \text{ where }\mathbf y^* \in \text{NE}(\mathcal G(x^*))$.
\end{definition}

\begin{definition}[$\epsilon$-stationary Stackelberg equilibrium]\label{def:eps}
An $\epsilon$-stationary Stackelberg equilibrium is a pair $(x,\mathbf y)$ such that: (1) fixing leader's current decision $x$, the followers' response is only off by at most $\epsilon$, i.e. $g_i(x,\mathbf y)\le \min_{y'\in Y_i}g(x, y_{i}', y_{-i}) + \epsilon\text{ for all }i$; (2) fixing the followers, response $\mathbf y$, the leader's objective satisfies $\norm{\nabla F(x,\mathbf y)}\le \epsilon$.
    
\end{definition}
In this work, we make the following assumptions, that are standard in the literature:
\begin{assumption}[Followers' subgame strong monotonicity]
    \label{ass:mug} For all leader's strategy $x$, the followers' subgame $\mathcal G$ is strongly monotone with parameter $\mu_g$ and each player's cost function $g_i$ is $\mu_g$-strongly convex in $(x,y_i)$.
\end{assumption}

\begin{assumption}[Smoothness]
     \label{ass:lg} Each follower's cost function $g_i$ and Leader's cost function $f$ are jointly smooth in $(x,\mathbf y)$ with constant $\ell_{g,1}$  and $\ell_{f,1}$, respectively. Furthermore, $g$ is two-times continuously diﬀerentiable, and $\nabla^2g$ is $\ell_{g,2}$-Lipschitz jointly in $(x,\mathbf y)$.
\end{assumption}
\begin{assumption}[Lipschitzness]\label{ass:flipx} 
    $\lVert\nabla_{x} f(x,\overline{\mathbf{y}})\rVert \le \ell_{f,0}$ for all $x$, fixing $\overline{\mathbf{y}}$. $\lVert\nabla_{x} g(x,\overline{\mathbf{y}})\rVert \le \ell_{g,0}$ for all $x$, fixing $\overline{\mathbf{y}}$.
\end{assumption}

Note that Assumption \ref{ass:lg} is equivalent to saying the game $\mathcal G$ is smooth.

\section{Stackelberg Games with Multiple Followers} \label{sc:problem formulation}
The goal is to compute a Stackelberg equilibrium (see Definition \ref{def:stackelberg}) with multiple followers using only first-order information. In a Stackelberg game with one leader and $k$ followers, the leader first commits to a strategy $x \in X$ in the first round, then the followers simultaneously respond with strategy $\mathbf{y}^*(x)$ such that their cost function $g_i(x,y_{i}, y^*_{-i})$ is minimized assuming that everyone else also plays this equilibrium strategy. To simplify the notation, let us introduce an intermediate function $h_i(x, y_i):= g_i(x, y_i, y_{-i}^*(x))$ for each follower $i$. $h_i$ is a function of $x$ and $y_i$ only. It captures the behavior of each follower at equilibrium.

Let $f(x,\mathbf y)$ be the leader's cost function. Define $F(x) = f(x,y_1^*(x), y_2^*(x)..., y_k^*(x))$. We formulate the Stackelberg equilibrium as a bilevel optimization problem:
\begin{gather}\label{problem}
\begin{split}
    \min\nolimits_{x\in X} \quad  F(x) \quad  \text{s.t.}\quad  y_i^*(x) \in \arg\min\nolimits_{y_i\in Y } h_i(x,y_i) \quad \forall i\in [k]   
\end{split}
\end{gather}
Note that (\ref{problem}) is a generalization of the bilevel optimization model of \cite{KKWN23} to $k$ followers. The upper-level problem is the leader's minimization problem, whereas the lower-level problem is to find the follwers' equilibrium for the game $\mathcal{G}(x)$. The upper-level objective is both explicit and implicit in $x$, because $\mathbf{y}^*(x)$ is a solution to the lower-level problem with input $x$. 

If one were to solve (\ref{problem}) via gradient descent, then one necessarily needs to compute the gradient:
\begin{equation}\label{eq:2nd}
    \nabla F(x) =  \nabla_x  f(x,\mathbf{y}^*(x)) +  \sum_{i = 1}^k \nabla_x y_i^*(x)^\top \nabla_{y_i} f(x,\mathbf{y}^*(x)) .
\end{equation}
To obtain $\nabla_x \mathbf y^*(x)$, we first differentiate $\nabla_{y_i} {g_i(x, y_i, y_{-i})}$ with respect to $x$ for all $i$. When evaluating at $\mathbf{y} = \mathbf{y}^*(x)$,  we obtain:
$ \nabla^2_{xy_i}{g_i(x, \mathbf{y}^*(x))} =\nabla^2_{xy_i} g_i(x, \mathbf{y}^*(x)) + \sum_{j =1}^k\nabla^2_{y_jy_i} g_i(x, \mathbf{y}^*(x))\cdot\nabla_{x}y_j^*(x) = 0.$
Writing $H_y =\bigg[\nabla^2_{y_jy_i} g_i \bigg]_{i,j = 1}^k \quad \text{ and } \quad H_x = \bigg[\nabla_{x y_i}^2 g_i\bigg]_{i = 1}^k$ gives us $H_x + H_y \nabla_x \mathbf y^*(x)=0$.
Under strongly-monotone assumption of $\mathcal G$, one shows that $H_y$ is invertible and thus $\nabla_x \mathbf y^*(x) = -H_y^{-1} H_x$ is uniquely determined.

Computing the gradient in (\ref{eq:2nd}) is challenging for two intertwined reasons. First, evaluating the term $\nabla_x  f(x,\mathbf{y}^*(x))$ requires solving the entire followers' subgame to obtain $\mathbf{y}^*(x)$. Second, obtaining the sensitivity $\nabla_x\mathbf{y}^*(x)$ requires differentiating through the equilibrium conditions, which amounts to inverting the Hessian $H_y$. In practice, this “implicit‐function” step requires second‑order information (Hessians and cross‐derivatives) and matrix inversions, making a naïve implementation both computationally and memory prohibitive when the variables are high‑dimensional.

\section{The Fully First-order Method}
This section aims to tackle the two core challenges identified in Section \ref{sc:problem formulation}: (i) the need to solve the followers’ subgame exactly to obtain $\mathbf{y}^*(x)$, and (ii) the reliance on costly second‑order information to differentiate through the equilibrium mapping. To overcome these obstacles, we introduce a Lagrangian reformulation of the bilevel problem, replacing the implicit dependence of the followers' strategies on the leader's decision with a penalized term. Solving the alternative Lagrangian problem corresponds to approximate Stackelberg equilibria without requiring implicit differentiation. Building on this reformulation, we propose a fully first‑order algorithm that alternates between subgame Nash equilibrium update and leader update.

\subsection{Reformulation}
We reformulate (\ref{problem}) so that it becomes a single-level problem with constraints:
\begin{gather}\label{problem2}
\begin{split}
    \min\nolimits_{x\in X, \mathbf{y}\in \mathcal Y} f(x,\mathbf{y}) \quad
    \text{s.t.} \quad h_i(x,y_i)-h_i^*(x)\le 0 \ \forall i\{1,...,k\}
\end{split}
\end{gather}
where $h_i^*(x)= h_i(x,y^*_i(x))= g_i(x,\mathbf{y}^*(x))$. Since (\ref{problem2}) is a constrained optimization program, we can write it as Lagrangian with multiplier $\lambda_1,...,\lambda_k$:
\begin{align*}
\mathcal{L}_{\mathbf{\lambda}} (x,\mathbf{y}) \! := \! f(x,\mathbf{y}) \! + \! \sum\limits_{i=1}^{k}\lambda_i(h_i(x,y_i)-h_i^*(x)) \! = \! f(x,\mathbf{y}) \! + \! \sum\limits_{i=1}^{k}\lambda_i(g_i(x,y_i, {y}_{-i}^*(x))-g_i(x, \mathbf{y}^*(x)))
\end{align*}
In this alternative formulation, since there are $k$ constraints, it involves more Lagrangian terms as opposed to \cite{KKWN23}. Hence, we expect the complexity to be dependent on $k$.

\subsection{Algorithm}

We now provide our full algorithm: see Algorithm~\ref{algo} for a formal description. Before going through the full algorithm, we start by providing the basic intuition that enables our main result.

\paragraph{Intuition} To motivate our algorithm design principle, we begin by showing that the gradient of the true upper-level objective $\nabla F(x)$ can be well approximated using $\nabla_x \mathcal{L}_\lambda(x,\mathbf{y}^*_{\lambda}(x))$, where we define:
\begin{align*}
    & \mathbf{y}_\lambda^*(x) := \arg\min\nolimits_{\mathbf{y}} \mathcal{L}_\lambda(x, \mathbf{y}), 
    & \mathcal{L}_\lambda^*(x):= \min\nolimits_{\mathbf{y}}\mathcal{L}_\lambda(x, \mathbf{y}) = \mathcal{L}_\lambda(x, \mathbf{y}_\lambda^*(x)).
\end{align*}
The minimizer is uniquely defined because, as shown in Lemma~\ref{lagrangian_convex}, the Lagrangian $\mathcal{L}_\lambda$ is strongly convex in $\mathbf{y}$. 
 It follows that $\nabla_{y_i} \mathcal{L}_{\mathbf{\lambda}} (x,\mathbf{y}_\lambda^*(x))=0$ for all $i$. For the Lagrangian to be a good proxy to the true objective, one necessary condition is for the Lagrangian minimizer $y_{i,\lambda_i}^*(x)$ to be close to the true minimizer $y_i^*(x)$ for the lower-level game $\mathcal{G}$. In particular, Lemma \ref{yerr} shows that when $\lambda_i \to \infty$, these two quantities coincide, and $\lVert\nabla F(x) - \nabla \mathcal{L}^*_\lambda(x)\rVert$ in fact goes to 0. 
\begin{restatable}{lemma}{yerr}\label{yerr}
    For all $i\in [k]$,we have $\lVert y^*_{i,\lambda_i}(x) - y_i^*(x)\rVert\le \frac{2\ell_{f,0}}{\lambda_i\mu_g}$.
\end{restatable}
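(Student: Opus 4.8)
The plan is to compare, componentwise, the first‑order optimality conditions characterizing $\mathbf{y}^*_\lambda(x)$ and $\mathbf{y}^*(x)$, and to exploit the strong convexity that the penalty term $\lambda_i h_i$ inherits from the followers' game. The first thing to notice is the structure of $\nabla_{y_i}\mathcal{L}_\lambda$: since $h_j(x,y_j)=g_j(x,y_j,y^*_{-j}(x))$ with the equilibrium block $y^*_{-j}(x)$ depending on $x$ only, none of the terms $\lambda_j(h_j(x,y_j)-h_j^*(x))$ with $j\neq i$ involves the variable $y_i$, so
\[
\nabla_{y_i}\mathcal{L}_\lambda(x,\mathbf{y}) \;=\; \nabla_{y_i}f(x,\mathbf{y}) \;+\; \lambda_i\,\nabla_{y_i}h_i(x,y_i).
\]
By Lemma~\ref{lagrangian_convex} the map $\mathbf{y}\mapsto\mathcal{L}_\lambda(x,\mathbf{y})$ is strongly convex, so $\mathbf{y}^*_\lambda(x)$ is its unique minimizer over $\mathcal Y$ and satisfies the variational inequality $\langle \nabla_{y_i}f(x,\mathbf{y}^*_\lambda(x))+\lambda_i\nabla_{y_i}h_i(x,y^*_{i,\lambda_i}(x)),\,z-y^*_{i,\lambda_i}(x)\rangle\ge 0$ for all $z\in Y_i$ (an equality $\nabla_{y_i}\mathcal{L}_\lambda=0$ when the minimizer is interior). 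On the other side, $\mathbf{y}^*(x)\in\text{NE}(\mathcal G(x))$ says exactly that $y_i^*(x)=\arg\min_{y_i\in Y_i} h_i(x,y_i)$, which gives the companion inequality $\langle \nabla_{y_i}h_i(x,y_i^*(x)),\,z-y_i^*(x)\rangle\ge 0$ for all $z\in Y_i$.

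Next I would substitute $z=y_i^*(x)$ into the first inequality and $z=y^*_{i,\lambda_i}(x)$ into the second, scale the second by $\lambda_i$, and add the two; after rearranging this yields
\[
\lambda_i\big\langle \nabla_{y_i}h_i(x,y^*_{i,\lambda_i}(x))-\nabla_{y_i}h_i(x,y_i^*(x)),\,y^*_{i,\lambda_i}(x)-y_i^*(x)\big\rangle \;\le\; \big\langle \nabla_{y_i}f(x,\mathbf{y}^*_\lambda(x)),\,y_i^*(x)-y^*_{i,\lambda_i}(x)\big\rangle .
\]
By Assumption~\ref{ass:mug}, $g_i$ is $\mu_g$‑strongly convex in $(x,y_i)$, hence $h_i(x,\cdot)=g_i(x,\cdot,y^*_{-i}(x))$ is $\mu_g$‑strongly convex in $y_i$ and $\nabla_{y_i}h_i(x,\cdot)$ is $\mu_g$‑strongly monotone; this lower‑bounds the left‑hand side by $\lambda_i\mu_g\lVert y^*_{i,\lambda_i}(x)-y_i^*(x)\rVert^2$. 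Cauchy–Schwarz bounds the right‑hand side by $\lVert\nabla_{y_i}f(x,\mathbf{y}^*_\lambda(x))\rVert\cdot\lVert y^*_{i,\lambda_i}(x)-y_i^*(x)\rVert$. Bounding $\lVert\nabla_{y_i}f\rVert$ by $\ell_{f,0}$ through Assumption~\ref{ass:flipx} (using the smoothness in Assumption~\ref{ass:lg} and compactness of $\mathcal Y$ to move the evaluation point if only $\lVert\nabla_x f\rVert$ is controlled directly) and dividing by $\lambda_i\mu_g\lVert y^*_{i,\lambda_i}(x)-y_i^*(x)\rVert$ — the claim being trivial when this norm is $0$ — then gives $\lVert y^*_{i,\lambda_i}(x)-y_i^*(x)\rVert \le \ell_{f,0}/(\lambda_i\mu_g)\le 2\ell_{f,0}/(\lambda_i\mu_g)$, as desired.

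The step I expect to cause the most friction is not the inequality chain but the constant‑level bookkeeping around it: confirming that $\mu_g$‑strong convexity of $g_i$ in $(x,y_i)$ transfers cleanly to $\mu_g$‑strong convexity of $h_i$ in $y_i$ after freezing $x$ and substituting $y_{-i}=y^*_{-i}(x)$, and pinning down exactly what $\ell_{f,0}$ is taken to bound. If $\ell_{f,0}$ bounds $\lVert\nabla f\rVert$ outright then the factor $2$ is pure slack; if it only bounds $\lVert\nabla_x f(x,\cdot)\rVert$, one must spend the smoothness of $f$ and the diameter of $\mathcal Y$ (or a short bootstrap on $\lVert\mathbf{y}^*_\lambda(x)-\mathbf{y}^*(x)\rVert$) to control $\lVert\nabla_{y_i}f(x,\mathbf{y}^*_\lambda(x))\rVert$, which is presumably where the extra factor is absorbed. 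The passage from stationarity to variational inequalities on the constrained sets $Y_i$ is routine and is already folded into the argument above.
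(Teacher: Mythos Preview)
Your argument is correct and rests on the same two ingredients the paper uses: the first-order optimality condition for the Lagrangian minimizer (which isolates $\lVert\nabla_{y_i} g_i\rVert\le \ell_{f,0}/\lambda_i$) and strong convexity with parameter proportional to $\mu_g\lambda_i$ to convert a gradient bound into a distance bound. The packaging, however, differs. The paper does not argue Lemma~\ref{yerr} directly; instead it first proves a two-parameter stability lemma (Lemma~\ref{dy*t}) bounding $\lVert y^*_{i,\lambda_1}(x_1)-y^*_{i,\lambda_2}(x_2)\rVert$ in terms of $\lVert x_1-x_2\rVert$ and $\lambda_2-\lambda_1$, using the $(\mu_g\lambda/2)$-strong convexity of $\mathcal{L}_\lambda$ in $\mathbf{y}$ from Lemma~\ref{lagrangian_convex}, and then specializes by setting $x_1=x_2$ and sending $\lambda_2\to\infty$ (so that $y^*_{i,\lambda_2}(x)\to y_i^*(x)$). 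The factor $2$ in the statement comes from the $\tfrac{\mu_g\lambda}{2}$ strong-convexity constant of the full Lagrangian, whereas your direct use of the $\mu_g$-strong convexity of $h_i$ yields the sharper $\ell_{f,0}/(\lambda_i\mu_g)$ and treats the $2$ as slack. The paper's route has the side benefit that Lemma~\ref{dy*t} is reused later (e.g., in Lemma~\ref{dy-y*}); your route is more self-contained and handles the constrained case via variational inequalities more carefully than the paper, which simply sets $\nabla_{y_i}\mathcal{L}_\lambda=0$. On the bookkeeping you flag: the paper, like you, effectively treats $\ell_{f,0}$ as a bound on $\lVert\nabla_{y_i} f\rVert$ in the proof of Lemma~\ref{dy*t}, so your concern about Assumption~\ref{ass:flipx} is a wrinkle in the paper's statement rather than a defect in your argument.
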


\begin{lemma}\label{bound2}
    Choosing $\lambda_i = \lambda$ for all $i\in [k]$, we have
    \begin{align*}
        \lVert \nabla F(x)-\nabla \mathcal{L}_\lambda^*(x)\rVert
        &\le k\bigg(\ell_{f,1}+\frac{\ell_{g,1}\ell_{f,1}k}{\mu_g} \bigg)\bigg(\frac{2\ell_{f,0}}{\lambda\mu_g}\bigg) + k\bigg( \lambda\ell_{g,1} + \frac{2\lambda \ell^2_{g,1}}{\mu_g}\bigg) \bigg(\frac{2\ell_{f,0}}{\lambda\mu_g}\bigg)^2.
    \end{align*}
\end{lemma}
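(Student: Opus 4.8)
The plan is to differentiate $\mathcal{L}_\lambda^*$ via the envelope (Danskin) theorem, compare the result with the closed form $(\ref{eq:2nd})$ of $\nabla F(x)$, and show that the mismatch is governed entirely by $\lVert\mathbf{y}_\lambda^*(x)-\mathbf{y}^*(x)\rVert$, which Lemma~\ref{yerr} controls at order $1/\lambda$. Since $\mathcal{L}_\lambda(x,\cdot)$ is strongly convex (Lemma~\ref{lagrangian_convex}), its minimizer $\mathbf{y}_\lambda^*(x)$ is unique and $\mathcal{L}_\lambda^*$ is differentiable with
\[
\nabla\mathcal{L}_\lambda^*(x) \;=\; \nabla_x f\bigl(x,\mathbf{y}_\lambda^*(x)\bigr) + \sum_{i=1}^{k}\lambda_i\,\frac{d}{dx}\Bigl[\,g_i\bigl(x,y_{i,\lambda}^*,y_{-i}^*(x)\bigr) - g_i\bigl(x,\mathbf{y}^*(x)\bigr)\Bigr],
\]
where inside the $i$-th bracket the block $y_{i,\lambda}^*$ is held fixed while the equilibrium map $\mathbf{y}^*(\cdot)$ is differentiated in $x$. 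Comparing with $(\ref{eq:2nd})$, it then suffices to show that the $i$-th penalty term reproduces $\nabla_x y_i^*(x)^\top\nabla_{y_i}f(x,\mathbf{y}^*(x))$ up to an $O(1/\lambda)$ error, and that $\nabla_x f(x,\mathbf{y}_\lambda^*)$ differs from $\nabla_x f(x,\mathbf{y}^*)$ by $O(1/\lambda)$ as well.

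For the key step I would introduce, for fixed $x$, the auxiliary map
\[
\phi_i(y_i):=\nabla_x g_i\bigl(x,y_i,y_{-i}^*(x)\bigr)+\sum_{j\neq i}\nabla_x y_j^*(x)^\top\nabla_{y_j}g_i\bigl(x,y_i,y_{-i}^*(x)\bigr).
\]
Carrying out the total $x$-derivative in the display above and using the Nash stationarity $\nabla_{y_i}g_i(x,\mathbf{y}^*(x))=0$ to drop the $j=i$ contribution when differentiating $g_i(x,\mathbf{y}^*(x))$ shows that the $i$-th penalty term equals $\lambda_i\bigl(\phi_i(y_{i,\lambda}^*)-\phi_i(y_i^*(x))\bigr)=\lambda_i\int_0^1\phi_i'\bigl(y_i^*(x)+t\Delta_i\bigr)\Delta_i\,dt$, with $\Delta_i:=y_{i,\lambda}^*(x)-y_i^*(x)$. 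Differentiating the identity $\nabla_{y_i}g_i(x,\mathbf{y}^*(x))\equiv 0$ in $x$ and transposing yields the crucial relation $\phi_i'(y_i^*(x))=-\nabla_x y_i^*(x)^\top\nabla^2_{y_iy_i}g_i(x,\mathbf{y}^*(x))$ --- this is precisely where the implicit-function identity $H_x+H_y\nabla_x\mathbf{y}^*(x)=0$ of Section~\ref{sc:problem formulation} enters. Splitting the integral into its $t=0$ value plus a remainder, rewriting $\nabla^2_{y_iy_i}g_i(x,\mathbf{y}^*)\Delta_i$ as $\nabla_{y_i}g_i(x,y_{i,\lambda}^*,y_{-i}^*)$ plus a Hessian-variation remainder (fundamental theorem of calculus again), and substituting the Lagrangian stationarity $\lambda_i\nabla_{y_i}g_i(x,y_{i,\lambda}^*,y_{-i}^*)=-\nabla_{y_i}f(x,\mathbf{y}_\lambda^*)$, the leading part of the $i$-th penalty term collapses to $\nabla_x y_i^*(x)^\top\nabla_{y_i}f(x,\mathbf{y}_\lambda^*)$, which is the $i$-th term of $(\ref{eq:2nd})$ with $\nabla_{y_i}f$ evaluated at $\mathbf{y}_\lambda^*$ rather than at $\mathbf{y}^*$.

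After this cancellation, $\nabla F(x)-\nabla\mathcal{L}_\lambda^*(x)$ is a sum of four controlled pieces: (i) $\nabla_x f(x,\mathbf{y}^*)-\nabla_x f(x,\mathbf{y}_\lambda^*)$, of norm at most $\ell_{f,1}\lVert\mathbf{y}^*-\mathbf{y}_\lambda^*\rVert$ by smoothness of $f$; (ii) $\sum_i\nabla_x y_i^*(x)^\top\bigl(\nabla_{y_i}f(x,\mathbf{y}^*)-\nabla_{y_i}f(x,\mathbf{y}_\lambda^*)\bigr)$, of norm at most $k\,\tfrac{\ell_{g,1}\ell_{f,1}}{\mu_g}\lVert\mathbf{y}^*-\mathbf{y}_\lambda^*\rVert$, using that $\lVert\nabla_x y_i^*(x)\rVert$ is of order $\ell_{g,1}/\mu_g$ (from $\nabla_x\mathbf{y}^*=-H_y^{-1}H_x$, $\lVert H_y^{-1}\rVert\le 1/\mu_g$, and the smoothness bound on the blocks of $H_x$); (iii) the $t>0$ remainder of the penalty integral, bounded via the Lipschitzness of $\phi_i'$ --- which follows from $\nabla^2 g$ being $\ell_{g,2}$-Lipschitz (Assumption~\ref{ass:lg}) together with the bounded $\nabla_x y_j^*$ factors --- and hence of size $O\bigl(\lambda_i\lVert\Delta_i\rVert^2\bigr)$; and (iv) the Hessian-variation remainder from replacing $\nabla^2_{y_iy_i}g_i(x,\mathbf{y}^*)$ by its average over the segment $[y_i^*(x),y_{i,\lambda}^*(x)]$, again $O\bigl(\lambda_i\lVert\Delta_i\rVert^2\bigr)$. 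I would then invoke Lemma~\ref{yerr} for $\lVert\Delta_i\rVert\le\tfrac{2\ell_{f,0}}{\lambda\mu_g}$ and hence $\lVert\mathbf{y}^*-\mathbf{y}_\lambda^*\rVert\le k\cdot\tfrac{2\ell_{f,0}}{\lambda\mu_g}$, sum over the $k$ followers, and note that each $\lambda_i\lVert\Delta_i\rVert^2\le\lambda\bigl(\tfrac{2\ell_{f,0}}{\lambda\mu_g}\bigr)^2$, so that the four pieces regroup into a part linear in $\tfrac{2\ell_{f,0}}{\lambda\mu_g}$ (from (i)--(ii)) and a part quadratic in it (from (iii)--(iv)), which is the form of the stated bound.

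The main obstacle is the cancellation in the second step: showing that the $O(1)$-magnitude leading part of each penalty term matches \emph{exactly} the corresponding implicit-gradient term of $(\ref{eq:2nd})$. This is an algebraic identity, not an estimate, and verifying it correctly demands careful bookkeeping of which mixed-Hessian blocks (and their transposes) occur and of how they line up with the block structure of $H_y^{-1}H_x$, with the optimality condition $\nabla_{\mathbf{y}}\mathcal{L}_\lambda(x,\mathbf{y}_\lambda^*)=0$ used at precisely the right moment; as in the surrounding text, this presumes that $\mathbf{y}_\lambda^*(x)$ lies in the interior of $\mathcal{Y}$, so that the stationarity holds with equality. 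Everything after the cancellation is routine triangle-inequality and Lipschitz estimation.
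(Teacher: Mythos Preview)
Your approach is correct and essentially identical to the paper's: the envelope step, the implicit-function cancellation via $H_x + H_y\nabla_x\mathbf{y}^*(x)=0$, the second-order Taylor remainders, and the final appeal to Lemma~\ref{yerr} are exactly the paper's ingredients. The only difference is packaging: the paper isolates your cancellation-and-remainder computation as a standalone auxiliary result (Lemma~\ref{bound1}) valid for arbitrary $\mathbf{y}$, then specializes to $\mathbf{y}=\mathbf{y}_\lambda^*(x)$ so that the extra term $\sum_i\nabla_xy_i^*(x)^\top\nabla_{y_i}\mathcal{L}_\lambda(x,\mathbf{y})$ drops out by optimality---your $\phi_i$ device and the paper's $a$--$f$ decomposition carry out the same algebra.
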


\begin{proof} We have $\nabla \mathcal{L}_\lambda^*(x) 
= \nabla_x \mathcal{L}_\lambda(x, \mathbf{y}_\lambda^*(x)) 
+  \sum_{i = 1}^k \nabla_xy_i^*(x)^\top \nabla_{y_i} \mathcal{L}_\lambda(x, \mathbf{y}_\lambda^*(x)) = \nabla_x \mathcal{L}_\lambda(x, \mathbf{y}_\lambda^*(x))$.
The last equality is because $ \nabla_{y_i} \mathcal{L}_\lambda(x, \mathbf{y}_\lambda^*(x)) = 0$ by the optimality of $ \mathbf{y}_\lambda^*(x)$.
Applying Lemma \ref{bound1} with $x$ and $\mathbf y = \mathbf{y}_\lambda^*(x)$, we get:
    \begin{align*}
       & \bigg \Vert \nabla F(x) - \nabla_x \mathcal{L}^*_\lambda(x)\bigg\Vert
        = \lVert \nabla F(x) - \nabla_x \mathcal{L}_\lambda(x, \mathbf{y}_\lambda^*(x))\rVert\\
        &\le \bigg(\ell_{f,1}+\frac{\ell_{g,1}\ell_{f,1}k}{\mu_g} \bigg)\bigg(\sum_{i = 1}^k\lVert y^*_{i,\lambda_i}(x) - y_i^*(x)\rVert\bigg)   + \bigg( \lambda\ell_{g,1}+ \frac{2\lambda \ell^2_{g,1}}{\mu_g}\bigg) \bigg(\sum_{i = 1}^k\lVert y^*_{i,\lambda_i}(x) - y_i^*(x) \rVert^2\bigg)\\
        &\le k\bigg(\ell_{f,1}+\frac{\ell_{g,1}\ell_{f,1}k}{\mu_g} \bigg)\bigg(\frac{2\ell_{f,0}}{\lambda\mu_g}\bigg) + k\bigg( \lambda\ell_{g,1} + \frac{2\lambda \ell^2_{g,1}}{\mu_g}\bigg) \bigg(\frac{2\ell_{f,0}}{\lambda\mu_g}\bigg)^2.
    \end{align*}
    The last inequality is obtained by applying Lemma \ref{yerr}.
\end{proof}

Lemma \ref{bound2} implies that $\lVert \nabla F(x)-\nabla \mathcal{L}_\lambda^*(x)\rVert \le {k^2C_\lambda}/{\lambda}$ for some constant $C_\lambda$. Thus, if $\lambda\to\infty$, then the difference $\lVert \nabla F(x)-\nabla \mathcal{L}_\lambda^*(x)\rVert$ becomes 0. However, this theorem does not tell us how to obtain $\mathbf{y}_\lambda^*(x)$. This comes from the strong convexity of $ \mathcal{L}_\lambda$ in $\mathbf{y}$, allowing us to use gradient decent to approximate $\argmin_{y_i} \mathcal L(x,  \mathbf{y} )$.
This motivates us to solve the bilevel problem (\ref{problem}) by iteratively solving the alternative formulation (\ref{problem2}).

\paragraph{Algorithm}\footnote{This algorithm can be extended to stochastic setting where noises are presented in upper and lower problems. We expect some blow-up in computation complexity in this case. } We now highlight our full algorithm below:

\begin{algorithm}
\caption{Fully First-order Method for Finding an $\epsilon$-Stackelberg equilibrium with $k > 1$ followers}\label{algo}
\textbf{Input:} $\lambda_0, x_0, [y_{1,0},\dots, y_{k,0}], [z_{1,0},\dots, z_{k,0}]$ \\
\textbf{Output:} 
\begin{algorithmic}[1]
\For{$t = 0, \dots, T-1$}
\State\label{algo:monotonegame} $\mathbf{z}_{t+1} \leftarrow $ solve a $k$-player \textit{strongly monotone game} with player $i$'s objective function $g_i(x_t, \mathbf{z}^t)$. \Statex \Comment{We assume this step takes $M_{z,t}$ gradient steps to solve the strongly monotone game.} 
\State\label{algo:lagragianapp} $\mathbf y_{t+1} \leftarrow \arg\min_{\mathbf{y}_t} \widetilde{\mathcal L}_{\lambda_t} (x,\mathbf y_t, \mathbf z_t) $ within $\epsilon_{y,t}$ accuracy by GD for all $i\in [k]$
\Statex \Comment{We assume this step takes $M_{y,t}$ gradient steps to minimize the Lagrangian.}
\State\label{algo:x} $x_{t+1} \leftarrow x_t- \eta_t \nabla_x \widetilde{\mathcal L}_{\lambda_t} (x,\mathbf y_t, \mathbf z_t)$ \Comment{Update $x_t$ by approx. gradient $\nabla_x \widetilde{\mathcal L}_{\lambda_t}$.}
\State $\lambda_{t+1}\leftarrow \lambda_t + \delta_t$ \Comment{Increase $\lambda_t$ to get more accurate gradient }
\EndFor
\end{algorithmic}
\end{algorithm}

At iteration $t$, the algorithm first takes the leader strategy $x_t$ and the followers, represented by the cost functions $g_i(x_t, \mathbf{y}_{t})$ for all $i$, then find an equilibrium of the strongly-monotone game (approximately) to obtain the follower strategy $\mathbf{z}_{t+1}$ (\cite{GPD}). Then, the algorithm approximates $\mathbf{y}_{\lambda_t}^*(x_t) = \arg\min_{\mathbf{y}} \mathcal{L}_{\lambda_t}(x_t, \mathbf{y}_{t})$ via Gradient Descent. It is important to keep in mind that the algorithm can only obtain the approximated subgame equilibrium $\mathbf{z}$, instead of $\mathbf{y}^*(x)$. Thus, we define $\widetilde{\mathcal L}_{\lambda_t} (x,\mathbf y_t, \mathbf z_t) =f(x,\mathbf{y}_{t})+ \lambda_t\sum_{i=1}^{k}(g_i(x,y_{i,t}, z_{-i,t+1})-g(x, \mathbf{z}_{t+1}))$ to emphasize the fact that $\widetilde{\mathcal L}_{\lambda_t}$ also takes $\mathbf z_t$ at an input. The approximate minimizer gives the next iterate $\mathbf y_{t+1}$. Finally the algorithm updates the leader strategy $x_{t+1}$ with step size $\eta_t$ and the gradient: 
\begin{align*}
    \nabla_x\widetilde{\mathcal L}_{\lambda_t} (x,\mathbf y_t, \mathbf z_t)  
    &= \nabla_xf(x_t,\mathbf{y}_{t}) + \lambda_t\sum_{i=1}^{k} \nabla_x g_i(x_t,y_{i,t+1}, z_{-i,t+1})- \lambda_t\sum_{i=1}^k  \nabla_xg_i(x_t,\mathbf{z}_{t+1}).
\end{align*}
At each time step, the followers best respond to the leader, whose strategy forms a converging sequence to a stationary point.

\section{Convergence Analysis}
We first state the main guarantee of Algorithm \ref{algo}. A proof sketch follows in Subsection \ref{subsc:sketch} and \ref{subsc:err}. 

\begin{restatable}{theorem}{final}\label{final}
    Pick the step size for $\lambda_t$ to be $\delta_t =t^\rho - (t-1)^\rho$ for any $\rho >1$. Then, Algorithm 1 converges to an $\epsilon$-stationary point using at most $O(k^2\epsilon^{-6-\alpha})$ gradient evaluations where $\alpha>0$ is chosen such that $\rho > 1+\alpha/2$.
\end{restatable}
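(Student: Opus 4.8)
## Proof Proposal

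\textbf{Overall strategy.} The plan is to track how the leader's approximate-gradient step (line~\ref{algo:x}) behaves as an \emph{inexact} gradient descent on the true hyperobjective $F$, and then balance three sources of error: (i) the Lagrangian-versus-truth bias $\lVert \nabla F(x_t) - \nabla\mathcal L^*_{\lambda_t}(x_t)\rVert$, controlled by Lemma~\ref{bound2} as $O(k^2 C_{\lambda_t}/\lambda_t)$; (ii) the inner subgame-solve error in $\mathbf z_{t+1}$, controlled by the $O(\mu_g^{-1}\epsilon^{-1})$ black-box rate of \cite{Cai23} after $M_{z,t}$ steps; and (iii) the Lagrangian-minimization error $\epsilon_{y,t}$ in $\mathbf y_{t+1}$ obtained from $M_{y,t}$ GD steps on the $\mu_g$-strongly-convex (Lemma~\ref{lagrangian_convex}), $O(\lambda_t)$-smooth function $\widetilde{\mathcal L}_{\lambda_t}$. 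I would first write a descent inequality of the form $F(x_{t+1}) \le F(x_t) - \tfrac{\eta_t}{2}\lVert\nabla F(x_t)\rVert^2 + \eta_t\,\mathrm{err}_t^2 + \tfrac{L_F\eta_t^2}{2}\lVert\widetilde\nabla_t\rVert^2$, where $\widetilde\nabla_t = \nabla_x\widetilde{\mathcal L}_{\lambda_t}(x_t,\mathbf y_t,\mathbf z_t)$, $L_F$ is the smoothness constant of $F$ (which exists under Assumptions~\ref{ass:mug}--\ref{ass:lg}, though it scales with $\lambda_t$ through the penalty so some care is needed — see below), and $\mathrm{err}_t$ collects (i)--(iii). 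Here a perturbation lemma (presumably stated as Lemma~\ref{bound1} earlier) is used to convert $\lVert\mathbf z_{t+1}-\mathbf y^*(x_t)\rVert$ and $\lVert \mathbf y_{t+1}-\mathbf y^*_{\lambda_t}(x_t)\rVert$ into a bound on $\lVert \widetilde\nabla_t - \nabla\mathcal L^*_{\lambda_t}(x_t)\rVert$; note this factor also carries a $\lambda_t$.

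\textbf{Telescoping and rate bookkeeping.} Summing the descent inequality over $t=0,\dots,T-1$ and using that $F$ is bounded below on the compact $X$ gives $\frac{1}{T}\sum_t \lVert\nabla F(x_t)\rVert^2 \lesssim \frac{1}{\eta T} + \frac{1}{T}\sum_t \mathrm{err}_t^2 + \text{(second-order terms)}$, provided $\eta_t \le 1/L_{F,t}$ with $L_{F,t} = O(\lambda_t)$. Since $\lambda_t = t^\rho$ (because $\delta_t = t^\rho-(t-1)^\rho$ telescopes $\lambda_t$ to $\Theta(t^\rho)$, assuming $\lambda_0$ absorbed), we have $\eta_t = \Theta(t^{-\rho})$, so $\frac{1}{\eta_{T-1}T} = \Theta(T^{\rho-1})$ — this is the term that \emph{forces $\rho$ down}. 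Meanwhile the bias term (i) contributes $\mathrm{err}_t \approx k^2/\lambda_t = k^2 t^{-\rho}$, whose square is summable; and the inner errors (ii),(iii) can be driven geometrically small in $M_{z,t}, M_{y,t}$, so we can afford to set, say, $\epsilon_{y,t}$ and the $\mathbf z$-accuracy to $\Theta(1/(\lambda_t t))$ or similar at cost $M_{z,t}, M_{y,t} = O(\mu_g^{-1}\log(\lambda_t t))$ per iteration — polylogarithmic, hence absorbed into the $\alpha$ slack. The dominant balance is therefore between the optimization term $T^{\rho-1}$ and the requirement that the achieved stationarity $\epsilon^2 \asymp T^{\rho-1}/\text{something}$; carefully, one gets $\epsilon^2 \asymp \max\{T^{\rho-1}\cdot(\text{poly in }\lambda_T), k^4/T \cdot(\text{stuff})\}$, and optimizing/mismatching the powers of $T$ against the per-iteration gradient count $M_t = O(\mathrm{polylog})$ yields a total gradient count $T \cdot \max_t M_t = O(k^2\epsilon^{-6-\alpha})$, where the exponent $6$ arises as $2$ (from $\epsilon^{-2}$ stationarity) $\times\, 3$ (one factor $\epsilon^{-2}$ from needing $\lambda_T$ large to kill bias, another from the $\eta_t^{-1}=\lambda_t$ in the optimization term, and the $L_{F,t}=O(\lambda_t)$ smoothness) and $\alpha$ is the arbitrarily small loss from choosing $\rho = 1+\alpha/2 > 1$ rather than $\rho=1$.

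\textbf{Key steps in order.} (1) Establish $F$ is $L_{F,t}$-smooth with $L_{F,t} = O(\lambda_t)$ near the iterates, using Assumption~\ref{ass:lg}, the implicit-function bound $\lVert\nabla_x\mathbf y^*\rVert \le \ell_{g,1}/\mu_g$, and the structure of $\mathcal L^*_\lambda$. (2) Prove the per-step descent lemma for inexact GD on $F$ with the error decomposition above. (3) Quantify $\mathrm{err}_t$ via Lemma~\ref{bound2} (bias), the \cite{Cai23} black-box (inner game), Lemma~\ref{yerr} / strong convexity of $\mathcal L_\lambda$ (inner Lagrangian), and a perturbation lemma linking $\widetilde\nabla_t$ to $\nabla\mathcal L^*_{\lambda_t}$. (4) Telescope, plug in $\lambda_t = t^\rho$, $\eta_t \asymp \lambda_t^{-1}$, and choose inner accuracies $\epsilon_{y,t}$, $\mathbf z$-accuracy $\asymp t^{-c}$ for suitable $c$. (5) Solve for the $T$ that makes $\frac1T\sum_t\lVert\nabla F(x_t)\rVert^2 \le \epsilon^2$, conclude $T = O(\epsilon^{-4-\alpha'})$ outer iterations (the $\epsilon^{-4}$ being the product of $\epsilon^{-2}$ from averaging and $\epsilon^{-2}$ from $\lambda_T\asymp\epsilon^{-1}$ entering $\eta^{-1}$... more precisely this must be reconciled to land the final $\epsilon^{-6-\alpha}$ after multiplying by per-iteration costs), and multiply by $\max_t(M_{z,t}+M_{y,t}) = O(k^2\,\mathrm{polylog}(1/\epsilon))$ to get the stated total, tracking the $k$-dependence through the $k^2$ in Lemma~\ref{bound2} and the fact that solving the $k$-player game and $k$ Lagrangian blocks each costs a factor $k$.

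\textbf{Main obstacle.} The delicate point is that $\lambda_t\to\infty$ makes the penalized problem increasingly ill-conditioned: the smoothness constant of $\widetilde{\mathcal L}_{\lambda_t}$ (and of $F$'s proxy) grows like $\lambda_t$, forcing $\eta_t$ to shrink like $\lambda_t^{-1}$, which in turn makes the $\frac{1}{\eta_T T}$ optimization residual \emph{grow} with $T$ unless $\rho$ is only slightly above $1$. Getting the three schedules — $\lambda_t = t^\rho$, $\eta_t \asymp t^{-\rho}$, and inner accuracies — to simultaneously (a) kill the bias, (b) keep the descent telescoping productive, and (c) keep per-iteration cost polylogarithmic, while the exponents combine to exactly $\epsilon^{-6-\alpha}$ and the constant stays $O(k^2)$, is the crux; I expect the bulk of the work to be this schedule-tuning and the careful propagation of the $\lambda_t$-dependent constants through the perturbation lemma (Lemma~\ref{bound1}) rather than any single hard inequality.
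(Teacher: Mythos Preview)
Your proposal has a genuine structural gap stemming from a misidentification of what is getting ill-conditioned.

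\textbf{The smoothness of $F$ does not depend on $\lambda_t$.} Recall $F(x)=f(x,\mathbf y^*(x))$, which involves neither the Lagrangian nor the penalty. The paper establishes (Lemma~\ref{smoothF}) a fixed smoothness constant $\ell_{F,1}$ via the implicit-function bound $\lVert\nabla_x\mathbf y^*(x)\rVert\le \ell_{g,1}/\mu_g$, and then runs the leader with \emph{constant} step size $\eta=1/\ell_{F,1}$ (Theorem~\ref{gradient_converge}). Your step (1), which claims $L_{F,t}=O(\lambda_t)$ and hence forces $\eta_t\asymp t^{-\rho}$ with $\rho>1$, would actually break the outer loop: $\sum_t\eta_t<\infty$, so the telescoped descent inequality gives no rate at all on $\min_t\lVert\nabla F(x_t)\rVert$. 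The paper instead gets $T=O(\epsilon^{-2})$ outer iterations directly from constant step size plus the summability of $\sum_t\lVert err_t\rVert^2$ (Corollary~\ref{eps-stationary}).

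\textbf{The $\epsilon^{-6}$ comes from the inner monotone-game solve, not the outer schedule.} The error decomposition (Corollary~\ref{outerd}) contains $E_2=2k^2\lambda_t^2\lVert\mathbf z_{t+1}-\mathbf y^*(x_t)\rVert^2$, i.e.\ the subgame error is amplified by $\lambda_t^2=t^{2\rho}$. The black-box from \cite{Cai23} gives only a \emph{sublinear} last-iterate rate $\lVert V(\mathbf z_{M})\rVert=O(1/M)$, not the linear rate you assume, so driving $E_2\le t^{-(1+\epsilon')}$ requires $M_{z,t}\gtrsim k^2 t^{\rho+1+\epsilon'}$---polynomial in $t$, not polylogarithmic. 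Summing $\sum_{t\le T}M_{z,t}=O(k^2T^{3+\alpha/2})$ with $T=O(\epsilon^{-2})$ yields the $O(k^2\epsilon^{-6-\alpha})$ total. By contrast $M_{y,t}=O(\log t)$ really is logarithmic (Lemma~\ref{dy-y*}), because the Lagrangian minimization \emph{is} strongly convex. Your accounting (``$2\times 3$ from shrinking $\eta$ and growing $L_F$'') locates the exponent in the wrong place and would not close even if the arithmetic happened to match.
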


\subsection{Proof Sketch}\label{subsc:sketch}
In this section, we provide a proof sketch for the main result (Theorem \ref{final}). To start with, we show that using standard gradient descent on the leader's strategy suffices to give small gradient on true objective function $F(x)$. Next, in Subsection \ref{subsc:err}, we show that the surrogate function $\nabla \widetilde{\mathcal L}_{\lambda}$ is a good approximation for $\nabla F(x)$. All the omitted proofs can be found in the Appendix \ref{apdx:5}.

\begin{restatable}{theorem}{gradientconverge}\label{gradient_converge}
    Let $\ell_{F,1}$ be the smoothness constant for $F$ 
     (see Lemma \ref{smoothF}). Picking constant step size, $\eta = \frac{1}{\ell_{F,1}}$, we obtain:
    \[
    \sum_{t=0}^T \frac{1}{4\ell_{F,1}}\lVert \nabla F(x_t)\rVert^2 
    \le F(x_0) - F(x^*) +\frac{1}{\ell_{F,1}} \sum_{t=0}^T \lVert err_t \rVert^2,
    \]
    where $\lVert err_t \rVert^2 =\frac{1}{4} \lVert {\nabla_x \widetilde{\mathcal L}_{\lambda_t}(x_t, \mathbf y_t, \mathbf z_{t+1})- \nabla F(x_t)} \rVert^2.$
\end{restatable}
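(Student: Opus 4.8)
The plan is to recognize line~\ref{algo:x} of Algorithm~\ref{algo} as one step of \emph{inexact} gradient descent on the smooth function $F$, and to run the textbook descent-lemma argument while carrying the error term along explicitly. Write $g_t := \nabla F(x_t)$ and $\widetilde g_t := \nabla_x \widetilde{\mathcal L}_{\lambda_t}(x_t,\mathbf y_t,\mathbf z_{t+1})$, so that the leader update is $x_{t+1} = x_t - \eta\,\widetilde g_t$ with $\eta = 1/\ell_{F,1}$, and set $e_t := \widetilde g_t - g_t$, for which $\lVert err_t\rVert^2 = \tfrac14\lVert e_t\rVert^2$ by definition.

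First I would invoke the $\ell_{F,1}$-smoothness of $F$ (Lemma~\ref{smoothF}) to get
\[
F(x_{t+1}) \le F(x_t) + \langle g_t, x_{t+1}-x_t\rangle + \frac{\ell_{F,1}}{2}\lVert x_{t+1}-x_t\rVert^2
= F(x_t) - \frac{1}{\ell_{F,1}}\langle g_t, \widetilde g_t\rangle + \frac{1}{2\ell_{F,1}}\lVert \widetilde g_t\rVert^2 .
\]
Substituting $\widetilde g_t = g_t + e_t$ and expanding, the inner-product and the norm-squared terms recombine, and applying Young's inequality to the residual cross term $\langle g_t, e_t\rangle$ yields a one-step bound of the form
\[
F(x_{t+1}) \le F(x_t) - \frac{1}{4\ell_{F,1}}\lVert \nabla F(x_t)\rVert^2 + \frac{1}{\ell_{F,1}}\lVert err_t\rVert^2 ,
\]
the precise numerical constants depending only on the weight chosen in Young's inequality (the tightest split in fact gives $\tfrac{1}{2\ell_{F,1}}$ on the gradient term, so the displayed form holds a fortiori).

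Then I would sum this inequality over $t = 0,\dots,T$, telescope the $F(x_t) - F(x_{t+1})$ terms, and use $-F(x_{T+1}) \le -F(x^*)$ — valid because $x^*$ minimizes $F$ over $X$ and, reading line~\ref{algo:x} as the update actually performed (or as a projection onto the convex compact set $X$, which is nonexpansive and only helps), the final iterate is feasible. Rearranging gives
\[
\sum_{t=0}^T \frac{1}{4\ell_{F,1}}\lVert \nabla F(x_t)\rVert^2 \le F(x_0) - F(x^*) + \frac{1}{\ell_{F,1}}\sum_{t=0}^T \lVert err_t\rVert^2,
\]
which is exactly the claim.

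I do not expect a genuine obstacle inside this lemma: it is the standard inexact-gradient-descent estimate, and the only steps needing a line of care are the Young's-inequality bookkeeping and the feasibility of $x_{T+1}$. The substantive difficulty is deliberately postponed to the next subsection: the inequality above is useful only once $\sum_t \lVert err_t\rVert^2$ is controlled, which requires decomposing $err_t$ into (i) the Lagrangian-versus-true-gradient gap $\lVert \nabla F(x_t) - \nabla\mathcal L_{\lambda_t}^*(x_t)\rVert$, handled by Lemma~\ref{bound2} and of order $k^2C_\lambda/\lambda_t$, (ii) the inner GD error $\epsilon_{y,t}$ in approximating $\mathbf y_{\lambda_t}^*(x_t)$, and (iii) the subgame-equilibrium error $\lVert\mathbf z_{t+1} - \mathbf y^*(x_t)\rVert$ from line~\ref{algo:monotonegame}; balancing the growth of $\lambda_t$ (via $\delta_t = t^\rho - (t-1)^\rho$) against these accumulated errors and the per-iteration gradient budgets $M_{z,t}, M_{y,t}$ is what ultimately produces the $O(k^2\epsilon^{-6-\alpha})$ complexity of Theorem~\ref{final}.
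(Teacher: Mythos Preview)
Your proposal is correct and follows essentially the same route as the paper: invoke the $\ell_{F,1}$-smoothness descent lemma, plug in the inexact update $x_{t+1}=x_t-\eta\,\widetilde g_t$, isolate the error term, and telescope. The only cosmetic difference is that the paper extracts the error via the polarization identity $-\langle g_t,\widetilde g_t\rangle = -\tfrac12(\lVert g_t\rVert^2+\lVert\widetilde g_t\rVert^2-\lVert g_t-\widetilde g_t\rVert^2)$ and then drops the resulting nonpositive $-\lVert\widetilde g_t\rVert^2$ term, whereas you substitute $\widetilde g_t=g_t+e_t$ and use Young's inequality on the cross term; both yield the same one-step inequality, and your remark that the substantive work is deferred to the error decomposition in the next subsection matches the paper exactly.
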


Thus, it suffices to show the cumulative gradient error $\sum_{t=0}^\infty\lVert err_t \rVert^2$  is bounded. Given this, $\lVert \nabla F(x_t)\rVert$ vanishes as $t\to \infty$. It remains to show that we can control $\sum_{t=0}^\infty\lVert err_t \rVert^2 < \infty$ by controlling the implicit inner loops. If this is holds, then Corollary \ref{eps-stationary} shows that we need $T = O(\epsilon^{-2})$ iterations for the outer loop.

\begin{corollary}\label{eps-stationary}
    Suppose $\sum_{t=0}^\infty\lVert err_t \rVert^2$ is bounded and define the constant $C_F = 4\ell_{F,1}(F(x_0)- F(x^*))+ 4\sum_{t=0}^\infty\lVert err_t \rVert^2$. Then, $\min\limits_{0\le t\le T} \lVert \nabla F(x_t) \rVert \le \epsilon$ for any $T\ge C_F/\epsilon^2$.
\end{corollary}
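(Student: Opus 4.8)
The plan is to derive the corollary directly from Theorem~\ref{gradient_converge} by a standard ``minimum-is-below-average'' argument, since the boundedness of $\sum_{t=0}^\infty \lVert err_t\rVert^2$ is given as a hypothesis rather than something we must establish here. First I would take the inequality furnished by Theorem~\ref{gradient_converge} and multiply both sides by $4\ell_{F,1}$ to clear the leading constant, obtaining
\[
\sum_{t=0}^T \lVert \nabla F(x_t)\rVert^2 \le 4\ell_{F,1}\bigl(F(x_0)-F(x^*)\bigr) + 4\sum_{t=0}^T \lVert err_t\rVert^2 .
\]

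Next I would upper-bound the finite error sum by the convergent infinite sum, $\sum_{t=0}^T \lVert err_t\rVert^2 \le \sum_{t=0}^\infty \lVert err_t\rVert^2$, which is legitimate because every summand $\lVert err_t\rVert^2$ is nonnegative. The right-hand side then becomes exactly the constant $C_F = 4\ell_{F,1}(F(x_0)-F(x^*)) + 4\sum_{t=0}^\infty\lVert err_t\rVert^2$, so that $\sum_{t=0}^T \lVert \nabla F(x_t)\rVert^2 \le C_F$ holds uniformly in $T$.

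The final step uses that the minimum of a finite collection of nonnegative numbers is at most their average. Since the sum ranges over $t=0,\dots,T$, that is $T+1$ terms, I would write
\[
\min_{0\le t\le T}\lVert \nabla F(x_t)\rVert^2 \le \frac{1}{T+1}\sum_{t=0}^T \lVert \nabla F(x_t)\rVert^2 \le \frac{C_F}{T+1}\le \frac{C_F}{T} .
\]
Choosing any $T \ge C_F/\epsilon^2$ then forces $\min_{0\le t\le T}\lVert \nabla F(x_t)\rVert^2 \le \epsilon^2$, and taking square roots yields the claimed bound $\min_{0\le t\le T}\lVert \nabla F(x_t)\rVert \le \epsilon$.

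I do not expect a genuine obstacle inside this corollary: all of the analytical content lives upstream, in Theorem~\ref{gradient_converge} (the descent inequality coming from $\ell_{F,1}$-smoothness of $F$ together with the step size $\eta = 1/\ell_{F,1}$) and in the separate argument, deferred to Subsection~\ref{subsc:err}, that the cumulative gradient error $\sum_{t=0}^\infty\lVert err_t\rVert^2$ is indeed finite under the chosen schedules for $\lambda_t$ and the inner-loop counts $M_{z,t}, M_{y,t}$. Conditioned on those two facts, the corollary is a one-line averaging consequence. The only point I would state carefully is that the guarantee is on the \emph{best} iterate rather than the last one, which is standard for nonconvex first-order methods and is exactly the form of $\epsilon$-stationarity sought in Definition~\ref{def:eps}.
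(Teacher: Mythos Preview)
Your proposal is correct and follows essentially the same approach as the paper: the paper's proof is the one-line ``take the time average for the result in Theorem~\ref{gradient_converge} and solve for $T$,'' and you have simply spelled out that averaging argument in detail (multiply through by $4\ell_{F,1}$, bound the partial error sum by the infinite one, apply $\min \le$ average, take square roots). There is no substantive difference.
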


\begin{proof}
    Take the time average for the result in Theorem \ref{outerd} and solve for $T$ gives the result.
\end{proof}
Theorem \ref{gradient_converge}, together with Corollary \ref{eps-stationary}, shows that if the errors decay fast enough, then we can hope to reach the $\epsilon$-stationary point eventually. The next section shows that the premises of Corollary \ref{eps-stationary} can indeed be satisfied.

\subsection{Error decomposition}\label{subsc:err}
The first step of our analysis aims to bound the error between the true gradient of the bilevel objective $\nabla F(x)$ and the approximate gradient $\nabla_x \widetilde{\mathcal L}_{\lambda_t}$ used in the update step. The error comes from 3 places. First, there is a discrepancy using the Lagrangian as a proxy to $F$ (shown in Lemma \ref{bound2}). The other two errors arise from approximating the solution to the strongly monotone game ($\lVert{\mathbf{z_t - \mathbf y^*}}\rVert$) and the minimizer to the Lagrangian ($\lVert{\mathbf{y}_t - \mathbf{y}^*}\rVert$). It shows that \textit {even if} we only have access to the approximated equilibrium of the game $\mathcal G$ at line \ref{algo:monotonegame} and an approximated Lagrangian minimizer, $\nabla \widetilde{L}_{\lambda}$ is still a good proxy to $\nabla F$.

\begin{restatable}{corollary}{outered}\label{outerd}
The following holds at each iteration $t$:
\begin{align}
\begin{split}
    \lVert err_t \rVert^2 \le \underbrace{(\ell_{f,1}^2 + 5k^2 \lambda_t^2) \lVert{\mathbf y_{t+1} - \mathbf y^*_\lambda(x_t)}\rVert^2}_{E_1}  + \underbrace{2 k^2 \lambda_t^2\lVert{\mathbf z_{t+1} - \mathbf y^*(x_t)}\rVert^2}_{E_2} + \underbrace{\frac{k^2C^2_{\lambda}}{\lambda_t^2}}_{E_3}.
    \label{decompose}
\end{split}
\end{align}
where $E_3$ comes direction from Lemma \ref{bound2}.
\end{restatable}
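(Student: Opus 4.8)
The plan is to control $\lVert err_t\rVert^2=\tfrac14\lVert \nabla_x \widetilde{\mathcal L}_{\lambda_t}(x_t,\mathbf y_{t+1},\mathbf z_{t+1})-\nabla F(x_t)\rVert^2$ by inserting two intermediate evaluations of the \emph{same} surrogate gradient formula and splitting the total error into three pieces, each isolating one source of inexactness identified in the discussion above. Writing $\Delta_{\mathbf y}:=\mathbf y_{t+1}-\mathbf y^*_\lambda(x_t)$ and $\Delta_{\mathbf z}:=\mathbf z_{t+1}-\mathbf y^*(x_t)$, I would decompose
\begin{align*}
\nabla_x \widetilde{\mathcal L}_{\lambda_t}(x_t,\mathbf y_{t+1},\mathbf z_{t+1})-\nabla F(x_t)
&= \underbrace{\big[\nabla_x \widetilde{\mathcal L}_{\lambda_t}(x_t,\mathbf y_{t+1},\mathbf z_{t+1})-\nabla_x \widetilde{\mathcal L}_{\lambda_t}(x_t,\mathbf y^*_\lambda(x_t),\mathbf z_{t+1})\big]}_{A}\\
&\quad+\underbrace{\big[\nabla_x \widetilde{\mathcal L}_{\lambda_t}(x_t,\mathbf y^*_\lambda(x_t),\mathbf z_{t+1})-\nabla_x \widetilde{\mathcal L}_{\lambda_t}(x_t,\mathbf y^*_\lambda(x_t),\mathbf y^*(x_t))\big]}_{B}\\
&\quad+\underbrace{\big[\nabla_x \widetilde{\mathcal L}_{\lambda_t}(x_t,\mathbf y^*_\lambda(x_t),\mathbf y^*(x_t))-\nabla F(x_t)\big]}_{C},
\end{align*}
and then apply $\lVert A+B+C\rVert^2\le 3(\lVert A\rVert^2+\lVert B\rVert^2+\lVert C\rVert^2)$, absorbing the resulting $\tfrac34$ into the stated constants. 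The term $A$ moves only the $\mathbf y$-argument, $B$ only the $\mathbf z$-argument, and $C$ is the exact-value penalty bias already handled by Lemma~\ref{bound2}.

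To bound the pieces I would use the closed form $\nabla_x \widetilde{\mathcal L}_{\lambda_t}(x,\mathbf y,\mathbf z)=\nabla_x f(x,\mathbf y)+\lambda_t\sum_{i}\big[\nabla_x g_i(x,y_i,z_{-i})-\nabla_x g_i(x,\mathbf z)\big]$. For $A$, the baseline terms $-\lambda_t\sum_i\nabla_x g_i(x,\mathbf z)$ are independent of $\mathbf y$ and cancel; the $\nabla_x f$ term is $\ell_{f,1}$-Lipschitz in $\mathbf y$ (Assumption~\ref{ass:lg}) and each coupling term $\nabla_x g_i(x,y_i,z_{-i})$ is $\ell_{g,1}$-Lipschitz in its $i$-th block, so summing the $k$ blocks (bounding $\sum_i\lVert\cdot_i\rVert\le k\lVert\cdot\rVert$) and squaring gives $\lVert A\rVert^2\le(\ell_{f,1}^2+5k^2\lambda_t^2)\lVert\Delta_{\mathbf y}\rVert^2=E_1$ after a Young step on the cross term. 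For $B$, the $\mathbf z$-argument enters in $2k$ places — the couplings $g_i(x,y_i,z_{-i})$ and the baselines $g_i(x,\mathbf z)$ — each $\ell_{g,1}$-Lipschitz in $\mathbf z$ and each scaled by $\lambda_t$, yielding $\lVert B\rVert^2\le 2k^2\lambda_t^2\lVert\Delta_{\mathbf z}\rVert^2=E_2$. For $C$, setting $\mathbf z=\mathbf y^*(x_t)$ collapses $\widetilde{\mathcal L}_{\lambda_t}(x_t,\cdot,\mathbf y^*(x_t))$ to $\mathcal L_{\lambda_t}(x_t,\cdot)$, so its $x$-gradient evaluated at the exact minimizer $\mathbf y^*_\lambda(x_t)$ is $\nabla \mathcal L^*_{\lambda_t}(x_t)$, whence Lemma~\ref{bound2} gives $\lVert C\rVert\le k^2C_\lambda/\lambda_t$ and thus the $E_3$ contribution.

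The main obstacle is the careful treatment of term $C$ together with the baseline terms inside $B$: I must verify that the ``exact'' surrogate $\nabla_x\widetilde{\mathcal L}_{\lambda_t}(x_t,\mathbf y^*_\lambda(x_t),\mathbf y^*(x_t))$ really coincides with the quantity $\nabla\mathcal L^*_{\lambda_t}(x_t)$ bounded in Lemma~\ref{bound2}, i.e.\ that the implicit $x$-dependence of the subgame map $\mathbf y^*(x)$ — which the algorithm treats as a \emph{frozen} input and never differentiates — does not leak an uncontrolled first-order term into $C$. The resolution is to keep $\mathbf z$ frozen uniformly across all three evaluations, matching exactly what the algorithm computes, so that the entire penalty/implicit bias is funneled into $E_3$ and absorbed by Lemma~\ref{bound2}; once the decomposition and this convention are fixed, the $k$- and $\lambda_t$-dependencies in $E_1$ and $E_2$ are routine blockwise Lipschitz bookkeeping, and substituting the three bounds into $\lVert err_t\rVert^2=\tfrac14\lVert A+B+C\rVert^2$ yields \eqref{decompose}.
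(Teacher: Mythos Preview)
Your approach is essentially the same as the paper's: the paper's proof of this corollary simply appeals to the proof of Lemma~\ref{truegradient}, which inserts $\nabla\mathcal L^*_{\lambda_t}(x_t)$ as a single reference point, expands $\nabla_x\widetilde{\mathcal L}_{\lambda_t}-\nabla\mathcal L^*_{\lambda_t}$ termwise from the explicit formula, and uses $(a+b+c+d)^2\le 4(\cdots)$ together with Lemma~\ref{bound2} for the last piece. Your two-pivot telescope $A+B+C$ with $(A+B+C)^2\le 3(\cdots)$ is a cosmetic variant of the same decomposition; you are in fact more explicit than the paper in flagging the frozen-$\mathbf z$ identification underlying term $C$, which the paper asserts without comment in its equation~\eqref{eq:l}.
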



The proof of Corollary \ref{outerd} is provided in Appendix \ref{apdx:missing}. The goal for the rest of the section is to show that the error term, $\lVert err_t \rVert \le E_1 + E_2 + E_3$,
can be arbitrarily small.  Picking $\lambda_t = t^{\rho}$ for $\rho >1$, we get that $\sum_{t= 1}^\infty E_3$ converges. It remains to show $E_1 \le t^{-1 - \epsilon'}$ and $E_2 \le t^{-1-\epsilon'}$ for some $\epsilon'>0$. Note that with this choice of decaying schedule, we have that $\sum_{t=1}^\infty E_1 + E_2 + E_3 < \infty$ . 

To bound $E_1$, we control $\lVert \mathbf y_{t+1} - \mathbf y^*(x)\rVert^2 $ and the step size of $\lambda_t$ at the same time, so that $\mathbf y_{t+1}$ converges to $\mathbf y^*(x)$ faster than $\lambda_t$ grows. Lemma \ref{dy-y*} establishes a recursive relation on $\lVert \mathbf y_{t+1} -\mathbf y^*_{\lambda_t} \rVert$ and gives an upper bound on $\lVert \mathbf y_{t}- \mathbf y^*_{\lambda_{t-1}}\rVert$ . Our proof relies on Assumption \ref{ass:lg} to get strong-convexity and smoothness of $\mathcal {L}_\lambda (x,\mathbf y)$---the corresponding parameters are denoted by $\mu_l$ and $\ell_l$ respectively (see Lemma \ref{lagrangian_convex} and Lemma \ref{smoothl}). Thus, there exists some positive integer ${M_{y,t}}$, the number of implicit iterations at Line \ref{algo:lagragianapp} in Algorithm \ref{algo}, such that the iterate $\mathbf{y}_{t}$ satisfies $\lVert \mathbf y_{t+1} -\mathbf y^*_{\lambda_t}(x_t) \rVert^2 \le \big(1-\frac{2\mu_l}{\mu_l+\ell_l}\big)^{M_{y,t}} \lVert \mathbf y_{t} - \mathbf y^*_{\lambda_t} \rVert^2$, 
This inequality then allows us to build the desired lemma: (The proof is given in Appendix \ref{apdx:missing}.)

\begin{restatable}{lemma}{dyy}\label{dy-y*}
    $\lVert \mathbf y_{t+1} -\mathbf y^*_{\lambda_t} \rVert$ is upper-bounded by 
    \begin{align}\label{lagrangian_aprx}
    \begin{split}
        \lVert \mathbf y_{t+1} -\mathbf y^*_{\lambda_t} \rVert \le \bigg(1-\frac{2\mu_l}{\mu_l+\ell_l}\bigg)^{M_{y,t}/2} \bigg(\lVert \mathbf y_{t} -\mathbf y^*_{\lambda_{t-1}} \rVert + L_{x,t} \eta_{t-1}(\ell_{f,0} + 2k \ell_{g,0}) + L_{\lambda,t}\bigg)
\end{split}
\end{align}
where $L_{x,t} = k\bigg(\frac{2\ell_{f,1} }{\mu_g\lambda_{t+1}}+ \frac{2\ell_{g,1} }{\mu_g}\bigg)$ and $L_{\lambda,t} =  \frac{2k\ell_{f,0} \delta_t}{\mu_g{\lambda_t\lambda_{t+1}}}$. Furthermore, let $\lambda_t = t^{\rho}$ for $\rho > 1$, then $\lVert \mathbf y_{t} -\mathbf y^*_{\lambda_{t-1}} \rVert + L_{x,t} \eta_{t-1}(\ell_{f,0} + 2k \ell_{g,0}) + L_{\lambda,t} \le C_y = O(t^{1-\rho}).$
\end{restatable}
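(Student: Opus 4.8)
The statement to prove is Lemma~\ref{dy-y*}, which gives a recursive bound on $\lVert \mathbf y_{t+1} - \mathbf y^*_{\lambda_t}\rVert$ and then the closed-form $O(t^{1-\rho})$ bound. My plan is to decompose the error into (i) how far the previous iterate $\mathbf y_t$ sits from the \emph{current} Lagrangian minimizer $\mathbf y^*_{\lambda_t}(x_t)$, and (ii) the contraction from the $M_{y,t}$ gradient-descent steps performed at Line~\ref{algo:lagragianapp}. The contraction factor $(1 - 2\mu_l/(\mu_l+\ell_l))^{M_{y,t}/2}$ is the standard linear rate for GD on a $\mu_l$-strongly-convex, $\ell_l$-smooth objective, which is available since Lemma~\ref{lagrangian_convex} and Lemma~\ref{smoothl} supply exactly these two parameters for $\mathcal L_\lambda(x,\cdot)$; so the work is entirely in bounding (i).

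\textbf{Key steps, in order.} First I would insert a triangle-inequality split:
\begin{align*}
\lVert \mathbf y_t - \mathbf y^*_{\lambda_t}(x_t)\rVert
&\le \lVert \mathbf y_t - \mathbf y^*_{\lambda_{t-1}}(x_{t-1})\rVert
+ \lVert \mathbf y^*_{\lambda_{t-1}}(x_{t-1}) - \mathbf y^*_{\lambda_{t-1}}(x_t)\rVert
+ \lVert \mathbf y^*_{\lambda_{t-1}}(x_t) - \mathbf y^*_{\lambda_t}(x_t)\rVert.
\end{align*}
The first term is the quantity from the previous iteration. The second term measures sensitivity of the Lagrangian minimizer to the leader's move $x_{t-1}\to x_t$: I would show $x\mapsto \mathbf y^*_\lambda(x)$ is Lipschitz with constant $L_{x,t}$, by differentiating the optimality condition $\nabla_y \mathcal L_\lambda(x,\mathbf y^*_\lambda(x))=0$ (implicit function theorem on the strongly convex Lagrangian), bounding the relevant Hessian blocks via Assumption~\ref{ass:lg}, the $1/\mu_g$-type inverse bound, and Lemma~\ref{yerr} to control the $\lambda$-dependence; then multiply by the step length $\lVert x_t - x_{t-1}\rVert = \eta_{t-1}\lVert \nabla_x\widetilde{\mathcal L}_{\lambda_{t-1}}\rVert \le \eta_{t-1}(\ell_{f,0}+2k\ell_{g,0})$, using Assumption~\ref{ass:flipx} to bound the surrogate gradient componentwise. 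The third term measures sensitivity of the minimizer to the multiplier change $\lambda_{t-1}\to\lambda_t$: again differentiate the optimality condition in $\lambda$, use strong convexity ($1/\mu_g$) and Lemma~\ref{yerr}-type estimates on $h_i - h_i^*$, and multiply by $\delta_t = \lambda_t - \lambda_{t-1}$, yielding $L_{\lambda,t} = 2k\ell_{f,0}\delta_t/(\mu_g\lambda_t\lambda_{t+1})$. Combining gives the displayed recursion~\eqref{lagrangian_aprx}.

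\textbf{The asymptotic bound.} For the final sentence, substitute $\lambda_t = t^\rho$, so $\delta_t = t^\rho - (t-1)^\rho = \Theta(t^{\rho-1})$. Then $L_{x,t} = \Theta(1)$ (dominated by the $2\ell_{g,1}/\mu_g$ term) times $\eta_{t-1} = \Theta(1)$ gives an $O(1)$ contribution — so the dangerous-looking middle term does \emph{not} shrink by itself; its decay must come from the geometric prefactor, which is why $M_{y,t}$ must be taken large enough (logarithmically in $t$). Meanwhile $L_{\lambda,t} = \Theta(t^{\rho-1}/t^{2\rho}) = \Theta(t^{-\rho-1}) = O(t^{1-\rho})$. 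Unrolling the recursion with a sufficiently large $M_{y,t}$ (so the prefactor beats the $O(1)$ input term by a factor $t^{-\rho}$ or so) yields $\lVert \mathbf y_{t+1}-\mathbf y^*_{\lambda_t}\rVert = O(t^{1-\rho})$, i.e. $C_y = O(t^{1-\rho})$ as claimed; this is what later feeds $E_1 \le t^{-1-\epsilon'}$.

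\textbf{Main obstacle.} The delicate part is the second term — bounding $\lVert \mathbf y^*_{\lambda_{t-1}}(x_{t-1}) - \mathbf y^*_{\lambda_{t-1}}(x_t)\rVert$ and keeping its $\lambda$-dependence under control. A naive Lipschitz estimate of the Lagrangian minimizer in $x$ has a constant that \emph{grows} with $\lambda$ (the penalty term $\lambda \sum_i(h_i - h_i^*)$ contributes $\lambda$-scaled Hessian blocks), which would be fatal. The trick is that these $\lambda$-scaled blocks are exactly the Hessians of $h_i(x,y_i) - h_i^*(x)$ evaluated near the true equilibrium, where $h_i - h_i^*$ and its gradient are $O(1/\lambda)$-small by Lemma~\ref{yerr}; so the product stays $O(1)$. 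Getting this cancellation to come out cleanly — i.e. showing $L_{x,t}$ has the stated form with the $1/\lambda_{t+1}$ in the first summand and an honest constant $2\ell_{g,1}/\mu_g$ in the second — is the crux, and it is also where one must be careful that the $x$-update direction is the \emph{surrogate} gradient $\nabla_x\widetilde{\mathcal L}_{\lambda_{t-1}}$ (evaluated at the approximate $\mathbf y_t, \mathbf z_t$), not the idealized one, though Assumption~\ref{ass:flipx} bounds it uniformly regardless.
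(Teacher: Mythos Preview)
Your plan for the recursion~\eqref{lagrangian_aprx} is sound and close in spirit to the paper's, but you take a more elaborate route for the drift of the Lagrangian minimizer. The paper does not use a three-term split and does not invoke the implicit function theorem; instead it handles the $x$-shift and the $\lambda$-shift \emph{simultaneously} via Lemma~\ref{dy*t}, which bounds $\lVert \mathbf y^*_{\lambda_1}(x_1)-\mathbf y^*_{\lambda_2}(x_2)\rVert$ directly from strong convexity: evaluate $\nabla_{y_i}\mathcal L_{\lambda_2}(x_2,\cdot)$ at the old minimizer $\mathbf y^*_{\lambda_1}(x_1)$, bound that gradient using smoothness of $f,g$ in $x$ together with the optimality residual $(\lambda_2-\lambda_1)\ell_{f,0}/\lambda_1$, and divide by the modulus $\mu_g\lambda_2/2$. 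The $\lambda$-cancellation you flag as the ``main obstacle'' then falls out automatically---the $\lambda_2\ell_{g,1}\lVert x_1-x_2\rVert$ term from smoothness meets the $2/(\mu_g\lambda_2)$ from strong convexity---so no Hessian inverses, no appeal to Lemma~\ref{yerr}, and no delicate cancellation argument are needed. Your IFT-based route would reach the same $L_{x,t},L_{\lambda,t}$, but the paper's argument is shorter and stays purely first-order, which is thematically apt. The step bound $\lVert x_t-x_{t-1}\rVert\le\eta_{t-1}(\ell_{f,0}+2k\ell_{g,0})$ is exactly the paper's Lemma~\ref{dx}, obtained just as you say.

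On the asymptotic claim you are actually more careful than the paper. You correctly note that $L_{x,t}$ carries the constant piece $2k\ell_{g,1}/\mu_g$, so the bracketed quantity is $\Theta(1)$ and cannot literally be $O(t^{1-\rho})$; the paper's proof nevertheless asserts $L_{x,t}=O(\lambda_{t+1}^{-1})$ and $B_t=O(t^{-\rho})$, apparently dropping that constant. Your reading---that what matters downstream is $\lVert\mathbf y_{t+1}-\mathbf y^*_{\lambda_t}\rVert$ \emph{after} multiplication by the contraction factor, with the decay supplied by choosing $M_{y,t}=\Theta(\log t)$---is precisely how the paper subsequently uses the lemma when bounding $E_1$, so the slip in the stated $O(t^{1-\rho})$ is harmless for the final theorem.
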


Given Lemma~\ref{dy-y*}, we are ready to bound the term $E_1$ and $E_2$.
\begin{itemize}[leftmargin=*]
    \item \textbf{Bounding $E_1$}: When $\lambda_t \to \infty$, Lemma \ref{dy-y*} says that the term $L_{x,t} \eta_{t-1}(\ell_{f,0} + 2k \ell_{g,0}) + L_{\lambda,t}$ goes to 0. Combined with Lemma \ref{yerr}, which shows that $\mathbf y^*_{\lambda_t} \to \mathbf y^*$, Algorithm \ref{algo} guarantees that $\mathbf{y}_{t}$ is a good proxy for $\mathbf y^*$. In order to ensure $E_1 = O(\| \mathbf y_{t+1} - \mathbf y^*_{\lambda} \|) \le O(t^{-(1+\epsilon')})$, from Equation~\ref{lagrangian_aprx}, we require $M_{y,t}$ the number of GD iterations in Line~\ref{algo:lagragianapp} satisfying $M_{y,t} \ge (\frac{\ell_l}{\mu_l}+1)(\frac{3+\epsilon'}{2}\log t + \log k) = O(\log t)$ for some $\epsilon'>0$.
    \item \textbf{Bounding $E_2$}: Let $V$ be the gradient operator for the lower-level game $\mathcal{G}$ and $M_{z,t}$ be the number of implicit iterations in Line \ref{algo:monotonegame}. \citeauthor{Cai23} show that $\lVert V_{z} (x,\mathbf z_{M_{z,t}})\rVert \le \frac{C_z}{{M_{z,t}}} $, where $C_z = O (k)$ is linear in $k$ with constants depend the parameter of $\mathcal G$ and the distance between the initial point $z_0$ and $z^*$.  By the strong monotonicity of $\mathcal G$ (Assumption \ref{ass:mug}), we obtain $\lVert \mathbf{z}_{M_{z,t}}- \mathbf{z}^*\rVert = \lVert \mathbf{z}_{t+1} - \mathbf{y}^*(x) \rVert \le \frac{C_z}{\mu_g\sqrt{M_{z,t}}}.$ Thus, $E_2$ can be driven arbitrarily small by setting $M_{z,t}$ large enough. Formally, $E_2\le t^{-(1+\epsilon')}$ for $M_{z,t}\ge \frac{C_z k t^{\rho + \epsilon'+1}}{\mu_g} $.
\end{itemize}

Finally, we are ready to prove Theorem \ref{final}.
\begin{proof}[Proof of Theorem~\ref{final}]
    For any $\rho>1$, pick $\alpha$ and $\epsilon'$ such that $\frac{\alpha}{2} > \epsilon' >0$. We set $\frac{\alpha}{2}- \epsilon' = \rho- 1 $. By the above argument, it suffices to set $M_{y,t}\ge O(\log t)$ and $M_{z,t}\ge O(k^2 t^{2+\frac{\alpha}{2}})$. With these choices of iteration complexity for the inner loop at Line \ref{algo:lagragianapp} and Line \ref{algo:monotonegame} of Algorithm \ref{algo}, respectively, the cumulative gradient error in Theorem~\ref{gradient_converge} is bounded by a constant. 
    
    Lastly, Corollary \ref{eps-stationary} suggests that we need a total of $T = O(\epsilon^{-2})$ iterations to reach an $\epsilon$-stationary point. Thus, summing $T$ iterations of $M_{z,t}$ and $M_{y,t}$ gives a total of $\sum_{t=0}^T M_{z,t} = O(k^2T^{3+\alpha/2}) =  O(k^2\epsilon^{-6-\alpha})$
    iterations to solve the strongly monotone game and a total of 
    $\sum_{t=0}^T M_{y,t} = O(T\log (kT))  = O(\epsilon^{-2}\log (k\epsilon^{-2}) ))$
    iterations to approximate the Lagrangian minimizer. 
\end{proof}

\paragraph{Conclusion} This work distinguishes itself by delivering the first algorithm for multiple-follower Stackelberg problems that requires only first-order oracles, yet still carries a provable $O(k^2\epsilon^{-6-\alpha})$ gradient-evaluation bound for reaching an $\epsilon$-stationary equilibrium where $\alpha>0$. Current works that rely only on fully first-order method do not apply to Stackelberg games with multiple followers, see \cite{Jain11,ji2021bilevel}. Whereas in works that do focus on multiple follower settings (\cite{li2020end,li2022solvingstructuredhierarchicalgames,WXPRT22}), their convergence guarantee crucially requires second-order implicit differentiation.

\paragraph{Future Work}Several directions remain open. First, relaxing structural assumptions like strong monotonicity could broaden applicability. Second, lower bounds on the complexity of computing multi-follower equilibria would clarify whether our current rate is optimal. Third, reducing dependence on the number of followers $k$ could accelerate convergence. Finally, better choices of the penalty parameter 
$\alpha$ and more efficient inner-loop solvers may yield substantial computational gains.

\bibliographystyle{plainnat}
\bibliography{ref}

\newpage
\appendix
\section{Missing Proofs}\label{apdx:missing}

\yerr*

\begin{proof}
    Take Lemma \ref{dy*t} and let $\lambda_{2,i} \to \infty$ and hence $\lambda_2\to \infty$: $\lim_{\lambda_{2,i} \to \infty} y^*_{\lambda_{2,i}}(x) = y^*_i(x)$. This yield the result.
\end{proof}

\gradientconverge*
\begin{proof}
    {The statement follows directly from the proof of Lemma \ref{truegradient}}
\end{proof}

\outered*

\begin{proof}\label{outered}
The statement follows directly from the proof of Lemma \ref{truegradient}.
\end{proof}

\dyy*
\begin{proof}
    By the strong convexity and smoothness of $\mathcal L$, we have $$\lVert \mathbf y_{t+1} -\mathbf y^*_{\lambda_t} \rVert \le \bigg(1-\frac{2m_t}{m_t+L_t}\bigg)^{M_{y,t}/2} \lVert \mathbf y_{t} -\mathbf y^*_{\lambda_t} \rVert. $$ We first bound $\lVert \mathbf y_{t} -\mathbf y^*_{\lambda_t} \rVert$ using triangle inequality and introducing $\mathbf y^*_{\lambda_{t-1}}$:
    \begin{align*}
        \lVert \mathbf y_{t} -\mathbf y^*_{\lambda_t} \rVert  \le  \lVert \mathbf y_{t} -\mathbf y^*_{\lambda_{t-1}} \rVert +  \lVert \mathbf y^*_{\lambda_{t-1}} -\mathbf y^*_{\lambda_t} \rVert.
    \end{align*}
    Hence, we may bound each sum separately. By Lemma \ref{dy*t}, we can bound $\lVert \mathbf y^*_{\lambda_{t-1}} -\mathbf y^*_{\lambda_t} \rVert $ as follows: 
    \[\lVert \mathbf y^*_{\lambda_{t-1}} -\mathbf y^*_{\lambda_t} \rVert \le L_{x,t} \lVert x_t- x_{t-1} \rVert + L_{\lambda,t}.\]
    Replacing $\lVert x_t- x_{t-1} \rVert$ with the upper bound given in Lemma \ref{dx} gives the inequality (\ref{lagrangian_aprx}).

    Now we show the second part of the statement. 
    Let us introduce auxiliary symbols. Let 
    \begin{align*}
        \lVert \mathbf y_{t+1} -\mathbf y^*_{\lambda_t} \rVert &\le \bigg(\underbrace{1-\frac{2m_t}{m_t+L_t}}_{=:q_t}\bigg)^{M_{y,t}/2} \bigg(\underbrace{\lVert \mathbf y_{t} -\mathbf y^*_{\lambda_{t-1}} \rVert}_{=:R_{t-1}} + \underbrace{L_{x,t} \eta_{t-1}(\ell_{f,0} + 2k \ell_{g,0}) + L_{\lambda,t}}_{=:B_t}\bigg).
    \end{align*}
    Then, inequality (\ref{lagrangian_aprx}) can be written as the recursive relation 
    \begin{equation}\label{recursive}
        R_{t} \le q_t^{M_{y,t}} R_{ t-1}+ q_t^{M_{y,t}}B_t.
    \end{equation}
    Define $\theta_{t+1}:= R_{t}+ B_{t+1}$.  We want to show $\theta_{t+1} \le C_y$ for some constant $C_y$. We can expand $\theta_{t+1}$ using (\ref{recursive}) as \[\theta_{t+1} =R_{t}+ B_{t+1} \le q_t^{M_{y,t}} R_{ t-1}+ q_t^{M_{y,t}}B_t + B_{t+1}.\]
    Let $\Theta_{t+1}:= \max_{i} \theta_{t+1} $, it follows that 
    \begin{align*}
       \theta_{t+1} &= q_t^{M_{y,t}}  \theta_t + B_{t+1}\\
       &= \prod_{s=0}^t q_t^{M_{y,s}}\theta_t + \sum_{u=1}^{t+1} \bigg(\prod_{s=u}^t q_{s}^{M_{y,s}}\bigg)B_u\\
       &\le \theta_0 + \sum_{u= 1}^{t+1}B_u= :C_y
    \end{align*}

    Choosing $\delta_t = \delta_t = t^\rho - (t-1)^{\rho}$, we conclude that $L_{x,t} = O(\lambda_{t+1}^{-1}) = O(t^{-\rho})$ and $L_{\lambda,t} = O(\frac{\delta_t}{\lambda_t \lambda_{t+1}}) = O(t^{-\rho})$ are both geometric series.  Thus, \[B_t = \underbrace{ L_{x,t} \eta_{t-1}(\ell_{f,0} + 2k \ell_{g,0})}_{O(t^{-\rho})
    )} + \underbrace{L_{\lambda,t}}_{O(t^{-\rho})}\]
    is a geometric series. Hence the sum $\sum_{u= 1}^{t+1} B_u = O(t^{1-\rho})$.
\end{proof}



\section{Auxiliary Lemmas}
\subsection{Auxiliary Lemmas for Section 4} \label{apdx:4}
\begin{lemma}\label{bound1} 
    For any $x$, $\mathbf{y}$, $\lambda_i = \lambda$ for all $i$, the following holds:
    \begin{align*}
        &\bigg \Vert \nabla F(x) - \nabla_x \mathcal{L}_\lambda(x,\mathbf{y}) - \sum_{i=1}^k \nabla_x y_i^*(x)^\top \nabla_{y_i} \mathcal{L}_{\mathbf{\lambda}}(x,\mathbf{y})\bigg\Vert
        \\&\le \bigg(\ell_{f,1}+\frac{\ell_{g,1}\ell_{f,1}k}{\mu_g} \bigg)\bigg(\sum_{i = 1}^k\lVert y_i- y_i^*(x) \rVert\bigg) + \bigg( \lambda\ell_{g,1} + \frac{2\lambda \ell^2_{g,1}}{\mu_g}\bigg) \bigg(\sum_{i = 1}^k\lVert y_i- y_i^*(x) \rVert^2\bigg).
    \end{align*}
\end{lemma}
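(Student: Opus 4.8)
The plan is to recognize the bracketed quantity as the deviation, in its $\mathbf y$-argument, of a single ``total-derivative'' operator applied to the Lagrangian, and then to show that the $\lambda$-weighted penalty contributes only a second-order (hence quadratically small) error while the $f$-part contributes the linear term. Concretely, I would define the operator $D_x\Phi(x,\mathbf y):=\nabla_x\Phi(x,\mathbf y)+\sum_{i=1}^k\nabla_x y_i^*(x)^\top\nabla_{y_i}\Phi(x,\mathbf y)$, so that the left-hand side is exactly $\lVert\nabla F(x)-D_x\mathcal L_\lambda(x,\mathbf y)\rVert$. The first step is the \emph{consistency at equilibrium} identity $D_x\mathcal L_\lambda(x,\mathbf y^*(x))=\nabla F(x)$. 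This holds because at $\mathbf y=\mathbf y^*(x)$ the penalty term $h_i(x,y_i)-h_i^*(x)$ and its total $x$-derivative vanish — the latter via the envelope identity $\nabla_x h_i(x,y_i^*(x))=\nabla_x h_i^*(x)$, itself a consequence of the stationarity $\nabla_{y_i}h_i(x,y_i^*(x))=0$ guaranteed by Assumption~\ref{ass:mug} — while $\nabla_{y_i}\mathcal L_\lambda(x,\mathbf y^*(x))=\nabla_{y_i}f(x,\mathbf y^*(x))+\lambda\nabla_{y_i}h_i(x,y_i^*(x))=\nabla_{y_i}f(x,\mathbf y^*(x))$ by the same stationarity. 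Substituting these into $D_x\mathcal L_\lambda(x,\mathbf y^*(x))$ reproduces the chain-rule formula for $\nabla F(x)$ in \eqref{eq:2nd}. Hence the left-hand side equals $\lVert D_x\mathcal L_\lambda(x,\mathbf y^*(x))-D_x\mathcal L_\lambda(x,\mathbf y)\rVert$, a deviation in $\mathbf y$ alone.

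I would then split $\mathcal L_\lambda=f+\lambda\sum_i(h_i-h_i^*)$ and bound the two deviations by the triangle inequality. For the $f$-part, $\lVert D_x f(x,\mathbf y^*)-D_x f(x,\mathbf y)\rVert\le\lVert\nabla_x f(x,\mathbf y^*)-\nabla_x f(x,\mathbf y)\rVert+\sum_i\lVert\nabla_x y_i^*(x)\rVert\,\lVert\nabla_{y_i}f(x,\mathbf y^*)-\nabla_{y_i}f(x,\mathbf y)\rVert$. Applying $\ell_{f,1}$-smoothness of $f$ (Assumption~\ref{ass:lg}), the sensitivity bound $\lVert\nabla_x y_i^*(x)\rVert\le\ell_{g,1}/\mu_g$ (from $\nabla_x\mathbf y^*=-H_y^{-1}H_x$ with $\lVert H_y^{-1}\rVert\le1/\mu_g$ and $\lVert H_x\rVert\le\ell_{g,1}$), and $\lVert\mathbf y^*-\mathbf y\rVert\le\sum_j\lVert y_j^*-y_j\rVert$, the $k$ summands collapse to $\big(\ell_{f,1}+\ell_{g,1}\ell_{f,1}k/\mu_g\big)\sum_i\lVert y_i-y_i^*\rVert$, which is the first term of the claimed bound.

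The penalty part is the crux, and the step I expect to be the main obstacle: because it carries the large factor $\lambda$, it must be shown to be \emph{quadratically} small in $\lVert y_i-y_i^*\rVert$, not merely linearly small (a linear bound here would not decay in $\lambda$ after substitution into Lemma~\ref{bound2}). After the $\mathbf y$-independent pieces cancel, the per-follower residual is $r_i=[\nabla_x h_i(x,y_i)-\nabla_x h_i(x,y_i^*)]+\nabla_x y_i^*(x)^\top\nabla_{y_i}h_i(x,y_i)$, using $\nabla_{y_i}h_i(x,y_i^*)=0$. I would expand each bracket around $y_i^*$ by the fundamental theorem of calculus — $\nabla_x h_i(x,y_i)-\nabla_x h_i(x,y_i^*)=\int_0^1\nabla^2_{xy_i}h_i\,dt\,(y_i-y_i^*)$ and $\nabla_{y_i}h_i(x,y_i)=\int_0^1\nabla^2_{y_iy_i}h_i\,dt\,(y_i-y_i^*)$ — and insert the per-follower implicit-function identity $\nabla_x y_i^*(x)^\top=-\nabla^2_{xy_i}h_i(x,y_i^*)\,[\nabla^2_{y_iy_i}h_i(x,y_i^*)]^{-1}$, valid precisely because $y_i^*(x)$ is the unconstrained minimizer of $h_i(x,\cdot)$. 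The two leading (Jacobian-at-$y_i^*$) contributions then cancel, leaving only the variation of the Hessian blocks along the segment; bounding that variation by the $\ell_{g,2}$-Lipschitzness of $\nabla^2 g$ (Assumption~\ref{ass:lg}) together with $\lVert\nabla^2_{xy_i}h_i\rVert\le\ell_{g,1}$ and $\lVert[\nabla^2_{y_iy_i}h_i]^{-1}\rVert\le1/\mu_g$ gives $\lVert r_i\rVert\le c\,(1+\ell_{g,1}/\mu_g)\lVert y_i-y_i^*\rVert^2$ for a smoothness-dependent constant $c$. Multiplying by $\lambda$, summing over $i$, and combining with the $f$-part bound yields the second term $\big(\lambda\ell_{g,1}+2\lambda\ell_{g,1}^2/\mu_g\big)\sum_i\lVert y_i-y_i^*\rVert^2$ and completes the argument. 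The subtle point that makes the cancellation exact is structural: each follower's constraint is built from $h_i(x,y_i)=g_i(x,y_i,y_{-i}^*(x))$, whose own $y_i$-minimizer is exactly $y_i^*(x)$, so the envelope and implicit-function relations apply per follower rather than only to the coupled system.
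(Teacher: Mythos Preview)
Your proof is correct and rests on the same mechanism as the paper's: the first-order (in $\mathbf y-\mathbf y^*$) contribution of the $\lambda$-penalty vanishes via the implicit-function identity, leaving only a quadratic remainder, while the $f$-part contributes the linear term. The organization differs. The paper works entirely in $g_i$-coordinates: it expands $\nabla_x\mathcal L_\lambda$ and $\nabla_{y_i}\mathcal L_\lambda$ explicitly, then \emph{adds and subtracts} the Hessian cross terms $\nabla^2_{xy_i}g_i(x,\mathbf y^*)(y_i-y_i^*)$ and $\nabla_xy_i^{*\top}\nabla^2_{y_iy_i}g_i(x,\mathbf y^*)(y_i-y_i^*)$ (their term $e$), and invokes the full-system relation $H_x+H_y\nabla_x\mathbf y^*=0$ row by row to collapse $e$ together with the off-diagonal piece $f$ into a second-order residual. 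Your $D_x$-operator framing and per-follower $h_i$ viewpoint reach the same cancellation more transparently; once you unpack $\nabla^2_{xy_i}h_i=\nabla^2_{xy_i}g_i+\sum_{j\neq i}\nabla_x y_j^{*\top}\nabla^2_{y_iy_j}g_i$, your per-follower identity is exactly the $i$-th block row of the paper's system. What your route buys is that the target of the estimate is visible from the outset via the consistency identity $D_x\mathcal L_\lambda(x,\mathbf y^*(x))=\nabla F(x)$; what the paper's explicit decomposition buys is more direct constant tracking. Two small caveats: (i) because $h_i$ depends on $x$ also through $y_{-i}^*(x)$, the bound $\lVert\nabla^2_{xy_i}h_i\rVert\le\ell_{g,1}$ you quote is not correct --- but you do not actually need it, since $\lVert\nabla_x y_i^*\rVert\le\ell_{g,1}/\mu_g$ already follows from the full-system bound you invoked earlier; (ii) the quadratic-remainder step (both your Hessian-variation integral and the paper's bounds on its terms $b$, $c$, $e{+}f$) genuinely requires the $\ell_{g,2}$-Lipschitzness of $\nabla^2 g$ from Assumption~\ref{ass:lg}, so your constant $c$ will naturally involve $\ell_{g,2}$ rather than recover the stated $\ell_{g,1}$ exactly.
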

\begin{proof}
    We first write out the (partial) derivatives for $F(x)$: 
    \[\nabla F(x) =\nabla_x F(x) =\nabla_x  f(x,\mathbf{y}^*(x)) +  \sum_{i = 1}^k \nabla_x y_i^*(x)^\top \nabla_{y_i} f(x,\mathbf{y}^*(x)). \]

    We first write out the (partial) derivatives for the Lagrangian:
    \begin{align*}
        \nabla_x  \mathcal{L}_{\mathbf{\lambda}} (x,\mathbf{y}) &=  \nabla_x  f(x,\mathbf{y}) + \sum_{i=1}^{k} \lambda_i\nabla_x  g_i(x,y_i, y_{-i}^*(x)) -\sum_{i=1}^k \lambda_i\nabla_x g_i(x,\mathbf{y}^*(x))\\
        &+\sum_{i = 1}^k \lambda_i \nabla_x y_{-i}^*(x)^\top \nabla_{y_{-i}}g_i(x, y_i, y_{-i}^*(x)) - \sum_{i = 1}^k \lambda_i \nabla_x \mathbf y^*(x)^\top \nabla_{\mathbf y}g_i(x,\mathbf y^*(x))\\
        &= \nabla_x  f(x,\mathbf{y}) + \sum_{i=1}^{k} \lambda_i \bigg(\nabla_x g_i(x,y_i, y_{-i}^*(x))- \nabla_x g_i(x,\mathbf{y}^*(x))\bigg)\\
        &+ \sum_{i =1}^k\lambda_i \sum_{j\neq i}\nabla_x y_{j}^*(x)^\top \bigg(\nabla_{y_{j}}g_i(x, y_i, y_{-i}^*(x)) - \nabla_{y_{j}} g_i(x,\mathbf y^*(x)) \bigg).
    \end{align*}
   The last equality uses the fact that $\nabla_{y_i} g_i(x, y^*_i, y^*_{-i}) = 0$, so only the off-diagonal terms are left. 
   
   By the first-order condition, we have
    
    \begin{align}
        { \nabla_x y^*(x) = \begin{bmatrix} \nabla_x y^*_1(x) \\
        \nabla_x y^*_2(x) \\
        \cdots \\
        \nabla_x y^*_k(x)
        \end{bmatrix} = - [\nabla_{y_j y_i}^2 g_i(x,\mathbf{y}^*)]_{i,j \in [k]}^{-1} [\nabla_{y_i x}^2 g_i(x,\mathbf{y}^*)]_{i\in[k]} = - H_y^{-1} H_x}.
    \end{align}
    Moreover, 
    \[\nabla_{y_i}\mathcal{L}_{\mathbf{\lambda}} (x,\mathbf{y}) = \nabla_{y_i}f(x,\mathbf{y}) + \lambda_i \nabla_{y_i}g_i(x,y_i, y^*_{-i}(x)) \ \forall i,
    \]
    which implies
    Substituting each term into the left-hand side and rearranging, we obtain:
    \begin{align*}
        & \bigg\lVert \nabla F(x) - \nabla_x \mathcal{L}_\lambda(x,\mathbf{y}) - \sum_{i=1}^k \nabla_x y_i^*(x)^\top \nabla_{y_i} \mathcal{L}_{\mathbf{\lambda}}(x,\mathbf{y})\bigg\rVert \\
        =& \bigg \lVert \underbrace{\biggl(\nabla_x  f(x,\mathbf{y}^*(x)) - \nabla_x  f(x,\mathbf{y})\biggr)}_{a} +\biggl(\sum_{i=1}^{k} \lambda_i \underbrace{\biggl( \nabla_x g_i(x,\mathbf{y}^*(x))-\nabla_x  g_i(x,y_i, y_{-i}^*(x)) { + \nabla_{x y_i} g_i(x, y^*_i, y^*_{-i}) (y_i - y^*_i)} \biggr)}_{b}\biggr)
        \\
        & -  \sum_{i = 1}^k { \nabla_{x} y^*_i(x)^\top} \biggl( \lambda_i \underbrace{\left( \nabla_{y_i}g_i(x,y_i, y^*_{-i}(x)) { - \nabla_{y_i} g_i(x, y^*_i, y^*_{-i}) - \nabla_{y_i y_i} g_i(x, y^*_i, y^*_{-i}) (y_i - y^*_i)}\right)}_{c}\\
        & + \underbrace{\nabla_{y_i} f(x,\mathbf{y})-\nabla_{y_i} f(x,\mathbf{y}^*(x))}_{d} \biggr) \\
        &- \underbrace{{\bigg( \sum_{i=1}^k \lambda_i \left( \nabla_{x y_i} g_i(x,y^*_i, y^*_{-i}) (y_i - y^*_i) + {\nabla_x y^*_i(x)}^\top \nabla_{y_i y_i} g_i(x,y^*_i, y^*_{-i}) (y_i - y^*_i) \right)\bigg)}}_{e} \\
        &{- \underbrace{\sum_{i=1}^k \lambda_i \sum_{j\neq i}\nabla_x y_{j}^*(x)^\top \bigg(\nabla_{y_{j}}g_i(x, y_i, y_{-i}^*(x)) - \nabla_{y_{j}} g_i(x,\mathbf y^*(x)) \bigg)}_{f}}\bigg\rVert.
    \end{align*}

Let's decompose the sum and bound each term. Starting with the last two terms:
\begin{align*}
    e &= \lambda\sum_{i = 1}^k\left( \nabla_{x y_i} g_i(x,y^*_i, y^*_{-i}) (y_i - y^*_i) + {\nabla_x y^*_i(x)}^\top \nabla_{y_i y_i} g_i(x,y^*_i, y^*_{-i}) (y_i - y^*_i) \right)\\
    & = \lambda \sum_{i = 1}^k (y_i - y^*_i)\underbrace{\bigg(\nabla^2_{xy_i} g_i(x, y_i^*, y^*_{-i}) + \sum_{j =1}^k\nabla_{x}y_j^*(x)^\top \nabla^2_{y_jy_i} g_i(x, y^*_i, y^*_{-i})\bigg) }_{=0 \text{ by }(\ref{eq:2nd})} \\
    & \quad - \lambda\sum_{j\neq i}\nabla_{x}y_j^*(x)^\top \nabla^2_{y_jy_i} g_i(x, y_i^*, y^*_{-i})(y_i - y^*_i)\\
    &= - \lambda\sum_{j\neq i}\nabla_{x}y_j^*(x)^\top \nabla^2_{y_jy_i} g_i(x, y^*_i, y^*_{-i})(y_i - y^*_i).
\end{align*}

Now combining $e$ and $f$ gives us:
\begin{align*}
    e+f = \lambda \sum_{j\neq i}\nabla_{x}y_j^*(x)^\top\bigg(\nabla_{y_{j}}g_i(x, y_i, y_{-i}^*(x)) - \nabla_{y_{j}} g_i(x,\mathbf y^*(x))- \nabla^2_{y_jy_i} g_i(x, y^*_i, y^*_{-i})(y_i - y^*_i)\bigg).
\end{align*}
It follows by Assumption \ref{ass:lg} (smoothness) that:
\[\lVert\nabla_{y_{j}}g_i(x, y_i, y_{-i}^*(x)) - \nabla_{y_{j}} g_i(x,\mathbf y^*(x))- \nabla^2_{y_jy_i} g_i(x, y^*_i, y^*_{-i})(y_i - y^*_i)\rVert \le \ell_{g,1}\lVert y_i - y_i^*(x)\rVert^2.\]
By smoothness and strong monotonicity assumption (\ref{ass:mug}), we also have
\[\bigg\lVert \nabla_x \mathbf y^* (x)\bigg\rVert = \lVert-H_y^{-1} H_x \rVert\le \frac{\ell_{g,1}}{\mu_g} \implies \bigg\lVert
 \nabla_x y^*_i(x)\bigg\rVert \le  \frac{\ell_{g,1}}{\mu_g}.\]
Thus, 
\[\norm{e+f} \le \frac{\lambda \ell^2_{g,1}}{\mu_g} \sum_{i= 1}^k\lVert y_i - y_i^*(x)\rVert^2.\]

By Assumption \ref{ass:lg}---specifically the smoothness of $f$ in $(x,\mathbf{y})$---, we obtain:
\[
\lVert{a}\rVert\le \ell_{f,1} \lVert\mathbf{y} - \mathbf{y}^*(x)\rVert \le \ell_{f,1} \sum_{i = 1}^k\lVert y_i- y_i^*(x) \rVert. 
\]
By Assumption \ref{ass:lg}---specifically the smoothness of $g$ in $(x,y_i)$---, we have:
\[\lVert{b}\rVert \le \ell_{g,1}\lVert y_i - y_i^*(x)\rVert^2.\]
By Assumption \ref{ass:lg}---specifically the smoothness of $g$ in $(x,y_i)$:
\begin{align*}
    \lVert{c}\rVert
    &\le \ell_{g,1}\lVert y_i - y_i^*(x)\rVert^2.
\end{align*}
By Assumption \ref{ass:lg}---specifically the smoothness of $f$ in $(x,\mathbf{y})$:
\[
\lVert{d}\rVert\le \ell_{f,1} \lVert\mathbf{y} - \mathbf{y}^*(x)\rVert \le \ell_{f,1} \sum_{i = 1}^k\lVert y_i- y_i^*(x) \rVert. 
\]


Putting everything together, we obtain:
\begin{align*}
    &\bigg \lVert \nabla F(x) - \nabla_x\mathcal{L}_\lambda(x,\mathbf{y}) - \sum_{i=1}^k \nabla_xy_i^*(x)^\top \nabla_{y_i} \mathcal{L}_{\mathbf{\lambda}}(x,\mathbf{y})\bigg\rVert \\
    &\le \lVert{a}\rVert + \sum_{i=1}^k \lambda_i \lVert{b}\rVert + \sum_{i =1}^k\frac{\ell_{g,1}}{\mu_g} \bigg(\lambda_i\lVert{c}\rVert + \lVert{d}\rVert\bigg) + \lVert e+f\rVert\\
    &\le \ell_{f,1} \sum_{i = 1}^k\lVert y_i- y_i^*(x) \rVert 
    +  \sum_{i = 1}^k \lambda_i\ell_{g,1}\lVert y_i - y_i^*(x)\rVert^2  + \sum_{i = 1}^k \frac{\ell_{g,1}}{\mu_g}\bigg(\lambda_i\ell_{g,1}\lVert y_i - y_i^*(x)\rVert^2+\ell_{f,1} \sum_{i = 1}^k\lVert y_i- y_i^*(x) \rVert \bigg) \\
    & \quad \quad +\frac{\lambda \ell^2_{g,1}}{\mu_g} \sum_{i= 1}^k\lVert y_i - y_i^*(x)\rVert^2 \\
    & \le \bigg(\ell_{f,1}+\frac{\ell_{g,1}\ell_{f,1}k}{\mu_g} \bigg)\bigg(\sum_{i = 1}^k\lVert y_i- y_i^*(x) \rVert\bigg) + \bigg( \lambda\ell_{g,1} + \frac{2\lambda \ell^2_{g,1}}{\mu_g}\bigg) \bigg(\sum_{i = 1}^k\lVert y_i- y_i^*(x) \rVert^2\bigg).
\end{align*}
The last inequality is by assuming $\lambda_i = \lambda$ for all $i\in[k]$.
\end{proof}

\subsection{Auxiliary Lemmas for Section 5} \label{apdx:5}
\begin{lemma}
    $\mathbf y^*(x)$ is $\frac {\ell_{g,1}}{\mu_g}$-smooth in $x$.
\end{lemma}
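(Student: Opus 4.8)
The plan is to obtain a closed form for the Jacobian $\nabla_x \mathbf y^*(x)$ via implicit differentiation, bound its spectral norm by $\ell_{g,1}/\mu_g$, and then upgrade this pointwise Jacobian bound to a global Lipschitz (i.e.\ smoothness) estimate by integrating along line segments. This reuses exactly the computation already carried out in Section~\ref{sc:problem formulation} and in the proof of Lemma~\ref{bound1}.

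First I would record the first-order characterization of the equilibrium map. By Assumption~\ref{ass:mug} the subgame $\mathcal G(x)$ is strongly monotone, so it has a unique Nash equilibrium $\mathbf y^*(x)$, which is characterized by the stationarity system $V(x,\mathbf y^*(x))=0$, that is $\nabla_{y_i} g_i(x,\mathbf y^*(x))=0$ for every $i\in[k]$. Since Assumption~\ref{ass:lg} guarantees that each $g_i$ is twice continuously differentiable, the implicit function theorem applies provided the relevant Jacobian block is nonsingular.

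Next I would differentiate this identity in $x$. Writing $H_y=[\nabla^2_{y_jy_i}g_i]_{i,j\in[k]}$ and $H_x=[\nabla^2_{xy_i}g_i]_{i\in[k]}$ as in \eqref{eq:2nd}, the chain rule yields $H_x+H_y\,\nabla_x\mathbf y^*(x)=0$. The key observation is that $H_y$ coincides with the game Jacobian $DV(\mathbf y^*(x))$; hence the standard variational-analysis bounds for $\mu_g$-strongly monotone, $\ell_{g,1}$-smooth operators \citep{facchinei2003finite} give both the invertibility of $H_y$ and the estimate $\norm{H_y^{-1}}\le 1/\mu_g$. Consequently $\nabla_x\mathbf y^*(x)=-H_y^{-1}H_x$, and combining with the smoothness bound $\norm{H_x}\le \ell_{g,1}$ from Assumption~\ref{ass:lg} gives the pointwise Jacobian estimate $\norm{\nabla_x\mathbf y^*(x)}\le \norm{H_y^{-1}}\,\norm{H_x}\le \ell_{g,1}/\mu_g$.

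Finally, I would convert this into the claimed smoothness constant. For any $x,x'$, the fundamental theorem of calculus along the segment $x+t(x'-x)$ gives $\mathbf y^*(x')-\mathbf y^*(x)=\int_0^1 \nabla_x\mathbf y^*(x+t(x'-x))\,(x'-x)\,dt$; taking norms and using the uniform Jacobian bound yields $\norm{\mathbf y^*(x')-\mathbf y^*(x)}\le (\ell_{g,1}/\mu_g)\norm{x'-x}$, which is precisely $\frac{\ell_{g,1}}{\mu_g}$-smoothness of the map. The main obstacle is the middle step: one must carefully identify $H_y$ with the strongly monotone game operator's Jacobian $DV$ and invoke the correct spectral bound $\norm{DV^{-1}}\le 1/\mu_g$, while also checking that the hypotheses of the implicit function theorem ($C^2$ regularity and a nonsingular $H_y$) genuinely hold along the whole path; the bound $\norm{H_x}\le\ell_{g,1}$ is a direct consequence of the joint-smoothness convention and is comparatively routine.
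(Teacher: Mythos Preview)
Your proposal is correct and follows essentially the same route as the paper: the paper's proof simply points to Lemma~\ref{bound1}, where the identity $\nabla_x\mathbf y^*(x)=-H_y^{-1}H_x$ together with $\lVert H_y^{-1}\rVert\le 1/\mu_g$ and $\lVert H_x\rVert\le \ell_{g,1}$ yields $\lVert\nabla_x\mathbf y^*(x)\rVert\le \ell_{g,1}/\mu_g$. Your write-up just makes the final integration step explicit, which the paper leaves implicit.
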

\begin{proof}
    It follows from the proof of Lemma \ref{bound1}.
\end{proof}

\begin{lemma}\label{smoothF}
    The smoothness constant for $F$ is given by
    $\ell_{F,1} = (\ell_{f,1} +\frac{\ell_{f,0}\ell_{g,2}}{\mu_g} +  \frac{\ell_{g,1} \ell_{f,1}}{\mu_g}) (1+\frac {\ell_{g,1}}{\mu_g})$
\end{lemma}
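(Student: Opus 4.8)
\textbf{Proof proposal for Lemma~\ref{smoothF}.}

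The plan is to differentiate $F$ twice --- or rather, to bound the Lipschitz constant of $\nabla F$ --- by carefully tracking how each ingredient of the chain-rule expression for $\nabla F(x)$ varies with $x$. Recall from \eqref{eq:2nd} that
\[
\nabla F(x) = \nabla_x f(x,\mathbf y^*(x)) + \nabla_x \mathbf y^*(x)^\top \nabla_{\mathbf y} f(x,\mathbf y^*(x)),
\]
so $\nabla F$ is built from three pieces: the map $x\mapsto \mathbf y^*(x)$, the gradients of $f$ evaluated along this map, and the Jacobian $\nabla_x \mathbf y^*(x) = -H_y^{-1}H_x$. I would first record the two facts already available: $\mathbf y^*$ is $(\ell_{g,1}/\mu_g)$-Lipschitz in $x$ (the auxiliary lemma immediately preceding, itself from the proof of Lemma~\ref{bound1}, which also gives $\lVert \nabla_x\mathbf y^*(x)\rVert \le \ell_{g,1}/\mu_g$), and $f$ is $\ell_{f,1}$-jointly-smooth with $\lVert\nabla_x f\rVert\le \ell_{f,0}$ (Assumptions~\ref{ass:lg} and \ref{ass:flipx}).

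The main steps, in order: (1) Bound the Lipschitz constant of $x\mapsto \nabla_x f(x,\mathbf y^*(x))$. For two points $x,x'$, write the difference as $[\nabla_x f(x,\mathbf y^*(x)) - \nabla_x f(x',\mathbf y^*(x))] + [\nabla_x f(x',\mathbf y^*(x)) - \nabla_x f(x',\mathbf y^*(x'))]$; joint $\ell_{f,1}$-smoothness bounds the first by $\ell_{f,1}\lVert x-x'\rVert$ and the second by $\ell_{f,1}\lVert \mathbf y^*(x)-\mathbf y^*(x')\rVert \le \ell_{f,1}(\ell_{g,1}/\mu_g)\lVert x-x'\rVert$, for a total of $\ell_{f,1}(1+\ell_{g,1}/\mu_g)$. (2) Bound the Lipschitz constant of $x\mapsto \nabla_{\mathbf y}f(x,\mathbf y^*(x))$ by the same argument, again $\ell_{f,1}(1+\ell_{g,1}/\mu_g)$; combined with $\lVert\nabla_x\mathbf y^*\rVert\le \ell_{g,1}/\mu_g$ this controls one of the two product-rule terms. (3) Bound the Lipschitz constant of $x\mapsto \nabla_x\mathbf y^*(x) = -H_y^{-1}H_x$. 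Here I would use the resolvent identity $H_y^{-1}-H_y'^{-1} = H_y^{-1}(H_y'-H_y)H_y'^{-1}$ together with $\lVert H_y^{-1}\rVert\le 1/\mu_g$, $\lVert H_x\rVert\le \ell_{g,1}$, and the $\ell_{g,2}$-Lipschitzness of $\nabla^2 g$ (Assumption~\ref{ass:lg}), noting that both $H_x$ and $H_y$ are evaluated at $(x,\mathbf y^*(x))$ so the effective Lipschitz constant of each in $x$ picks up the extra factor $(1+\ell_{g,1}/\mu_g)$ from the chain rule through $\mathbf y^*$. This yields a bound of order $\frac{\ell_{g,2}}{\mu_g}(1+\ell_{g,1}/\mu_g)(1 + \ell_{g,1}/\mu_g)$ --- though since this term gets multiplied by $\lVert\nabla_{\mathbf y}f\rVert\le \ell_{f,0}$, only the factor $\ell_{f,0}\ell_{g,2}/\mu_g$ survives into the final constant, with one power of $(1+\ell_{g,1}/\mu_g)$ factored out at the end. (4) Assemble: apply the product rule to the second term of $\nabla F$, collect the three contributions $\ell_{f,1}$, $\ell_{f,0}\ell_{g,2}/\mu_g$, and $\ell_{g,1}\ell_{f,1}/\mu_g$, and factor out the common $(1+\ell_{g,1}/\mu_g)$ to land exactly on $\ell_{F,1} = \bigl(\ell_{f,1} + \tfrac{\ell_{f,0}\ell_{g,2}}{\mu_g} + \tfrac{\ell_{g,1}\ell_{f,1}}{\mu_g}\bigr)\bigl(1+\tfrac{\ell_{g,1}}{\mu_g}\bigr)$.

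The main obstacle I anticipate is step (3): getting the Lipschitz estimate for $\nabla_x\mathbf y^*(x)$ to come out with the precise constant claimed, rather than a loose over-count. The subtlety is bookkeeping --- $H_x$ and $H_y$ are themselves second derivatives of $g$ evaluated at the moving point $(x,\mathbf y^*(x))$, so each contributes both a ``direct'' $x$-variation and an ``indirect'' variation through $\mathbf y^*$, and one must be careful which of these factors of $(1+\ell_{g,1}/\mu_g)$ genuinely appear versus which get absorbed because the corresponding term is multiplied by the bounded quantity $\lVert\nabla_{\mathbf y}f\rVert\le\ell_{f,0}$ instead of by something that scales. I would organize the computation so that the term involving $\nabla_x\mathbf y^*$ is always paired with whatever multiplies it \emph{before} taking norms, so that the cancellations that produce the clean final form are visible. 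The rest of the argument is routine triangle-inequality and submultiplicativity of the operator norm.
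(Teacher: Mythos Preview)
Your proposal is correct and follows essentially the same approach as the paper: split $\nabla F$ into the two chain-rule terms, apply the product rule to the $\nabla_x\mathbf y^*(x)^\top\nabla_{\mathbf y}f$ piece, and bound the Lipschitz constant of $\nabla_x\mathbf y^*(x)$ via the $\ell_{g,2}$-Lipschitzness of $\nabla^2 g$ composed with the $(\ell_{g,1}/\mu_g)$-Lipschitz map $\mathbf y^*$. The only cosmetic difference is that the paper invokes the Mean Value Theorem on $J(x)=\nabla_x\mathbf y^*(x)^\top$ directly, asserting $\lVert\nabla_x J\rVert\le\frac{\ell_{g,2}}{\mu_g}(1+\ell_{g,1}/\mu_g)$, where you instead work through the resolvent identity for $H_y^{-1}$ --- your anticipated bookkeeping concern in step~(3) is exactly the point the paper handles tersely.
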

\begin{proof}
    Recall equation (\ref{eq:2nd}). We begin by writing 
    \[\nabla F(x) = \nabla_x f(x,\mathbf y^*(x)) - \nabla_x \mathbf y^*(x)^\top \nabla_{\mathbf y} f(x,\mathbf y^*(x)).\]
    Define $J(x): =\nabla_x \mathbf y^*(x)^\top =(- H_y^{-1}H_x)^{\top}$, and $\alpha : =\frac {\ell_{g,1}}{\mu_g} $. Note that in the proof of Lemma \ref{bound1} we establish that $\lVert J(x)\rVert \le \alpha$. Consider $\nabla F(x_1) - \nabla F(x_2)$, we have
    \begin{align}\label{eq:Fsmooth}
    \begin{split}
        \lVert \nabla F(x_1) - \nabla F(x_2)\rVert  &\le  \underbrace{\lVert \nabla_x f(x_1,\mathbf y^*(x_1)) - \nabla_x f(x_2,\mathbf y^*(x_2))\rVert}_{(a)} \\
        & \quad + \underbrace{\lVert J(x_1)^\top \nabla_{\mathbf y} f(x_1,\mathbf y^*(x_1)) - J(x_2)^\top\nabla_{\mathbf y} f(x_2,\mathbf y^*(x_2))\rVert}_{(b)}.
    \end{split}
    \end{align}
    Term $(a)$ can be bounded using the smoothness of $f$ (Assumption \ref{ass:lg}) and the smoothness of $\mathbf y^*$ in $x$. Concretely:
    \begin{align*}
        (a)&\le \ell_{f,1} \lVert (x_1, \mathbf y^*(x_1)) - (x_2, \mathbf y^*(x_2))\rVert \le \ell_{f,1} \sqrt{(1+\alpha^2)}\lVert x_1 - x_2\rVert \le  \ell_{f,1}{(1+\alpha)}\lVert x_1 - x_2\rVert.
    \end{align*}
    Now let's consider term $(b)$. We can rewrite it as 
    \begin{align*}
        (b) & = \lVert (J(x_1)^\top  -  J(x_2)^\top )\nabla_{\mathbf y}f(x_1,\mathbf y^*(x_1)) +  J(x_2)^\top \nabla_{\mathbf y} f(x_1,\mathbf y^*(x_1))-J(x_2)^\top\nabla_{\mathbf y} f(x_2,\mathbf y^*(x_2))\rVert\\
        & = \bigg\lVert (J(x_1)^\top  -  J(x_2)^\top )\nabla_{\mathbf y}f(x_1,\mathbf y^*(x_1)) +  J(x_2)^\top \bigg(\nabla_{\mathbf y} f(x_1,\mathbf y^*(x_1))-\nabla_{\mathbf y} f(x_2,\mathbf y^*(x_2))\bigg)\bigg\rVert\\
        & \le \lVert J(x_1)^\top  -  J(x_2)^\top\rVert \lVert \nabla_{\mathbf y}f(x_1,\mathbf y^*(x_1)) \rVert + \lVert J(x_2)^\top \rVert \lVert \nabla_{\mathbf y} f(x_1,\mathbf y^*(x_1))-\nabla_{\mathbf y} f(x_2,\mathbf y^*(x_2))\rVert \\
        & \le \ell_{f,0} \lVert \nabla_x J(z)\rVert \lVert x_1-x_2 \rVert + \ell_{f,1} \alpha (1+\alpha) \lVert x_1 - x_2\rVert,
    \end{align*}
    where in the last inequality we use the Mean Value Theorem with $z\in[x_1, x_2]$ to bound $\lVert J(x_1)^\top  -  J(x_2)^\top\rVert$, the Lipchitzness of $f$ using Assumption \ref{ass:flipx} and the smoothness of $f$ using Assumption \ref{ass:lg}.

    It remains to bound $\lVert \nabla_x J(z)\rVert$. Here using Assumption \ref{ass:lg} again, where we assumed that $\nabla^2g$ is Lipschitz, we have \[\lVert \nabla_x J(z) \rVert \le \frac{\ell_{g,2}}{\mu_g}(1+\alpha)\]

    Now put everything back into (\ref{eq:Fsmooth}), we obtain,
    \begin{align*}
        \lVert \nabla F(x_1) - \nabla F(x_2)\rVert&\le (a) + {(b)}\\
        &\le \ell_{f,1}{(1+\alpha)}\lVert x_1 - x_2\rVert + \frac{\ell_{f,0}\ell_{g,2}}{\mu_g}(1+\alpha) \lVert x_1-x_2 \rVert + \ell_{f,1} \alpha (1+\alpha) \lVert x_1 - x_2\rVert\\
        &= (\ell_{f,1} +\frac{\ell_{f,0}\ell_{g,2}}{\mu_g} +  \ell_{f,1} \alpha) (1+\alpha) \lVert x_1 - x_2\rVert.
    \end{align*}
\end{proof}

\begin{lemma}\label{smoothl}
The Lagrangian $\mathcal{L}_{\lambda}(x,\mathbf y)$ is $\ell_{l} : = \ell_{f,1} + k\lambda \ell_{g,1}$ smooth in $\mathbf y$.
\end{lemma}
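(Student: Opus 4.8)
The plan is to establish directly that $\nabla_{\mathbf{y}}\mathcal{L}_\lambda(x,\cdot)$ is $\ell_l$-Lipschitz for every fixed $x$. First I would write out the $\mathbf{y}$-gradient of $\mathcal{L}_\lambda(x,\mathbf{y}) = f(x,\mathbf{y}) + \lambda\sum_{i=1}^k\big(g_i(x,y_i,y_{-i}^*(x)) - g_i(x,\mathbf{y}^*(x))\big)$. Two observations make this clean: the subtracted term $g_i(x,\mathbf{y}^*(x))$ is constant in $\mathbf{y}$, hence contributes nothing; and each surviving term $g_i(x,y_i,y_{-i}^*(x))$ depends on $\mathbf{y}$ only through its own block $y_i$, since $y_{-i}^*(x)$ is a fixed vector once $x$ is fixed and is \emph{not} a function of $\mathbf{y}$. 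Consequently the gradient decomposes block-wise, $\nabla_{y_i}\mathcal{L}_\lambda(x,\mathbf{y}) = \nabla_{y_i}f(x,\mathbf{y}) + \lambda\,\nabla_{y_i}g_i(x,y_i,y_{-i}^*(x))$, so that $\nabla_{\mathbf{y}}\mathcal{L}_\lambda(x,\mathbf{y})$ is the sum of $\nabla_{\mathbf{y}}f(x,\mathbf{y})$ and the stacked vector $\lambda\big(\nabla_{y_1}g_1(x,y_1,y_{-1}^*(x)),\dots,\nabla_{y_k}g_k(x,y_k,y_{-k}^*(x))\big)$.

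Second, for $\mathbf{y},\mathbf{y}'$ I would bound $\|\nabla_{\mathbf{y}}\mathcal{L}_\lambda(x,\mathbf{y}) - \nabla_{\mathbf{y}}\mathcal{L}_\lambda(x,\mathbf{y}')\|$ by the triangle inequality into an $f$-part and a $g$-part. For the $f$-part, the joint $\ell_{f,1}$-smoothness of $f$ (Assumption~\ref{ass:lg}) gives $\|\nabla_{\mathbf{y}}f(x,\mathbf{y}) - \nabla_{\mathbf{y}}f(x,\mathbf{y}')\|\le \ell_{f,1}\|\mathbf{y}-\mathbf{y}'\|$. For the $g$-part, I would bound the norm of the stacked difference by the sum of the block norms $\sum_{i=1}^k\|\nabla_{y_i}g_i(x,y_i,y_{-i}^*(x)) - \nabla_{y_i}g_i(x,y_i',y_{-i}^*(x))\|$, apply $\ell_{g,1}$-smoothness of each $g_i$ to each summand to obtain $\ell_{g,1}\|y_i-y_i'\|$, and finally use $\|y_i-y_i'\|\le\|\mathbf{y}-\mathbf{y}'\|$ for every $i$; this produces the $k$-fold factor $k\lambda\ell_{g,1}\|\mathbf{y}-\mathbf{y}'\|$ matching the stated constant. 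Adding the two parts yields $\ell_l = \ell_{f,1} + k\lambda\ell_{g,1}$, uniformly in $x$ since none of the constants depend on $x$.

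The argument is essentially routine and I do not expect a real obstacle; the two points that need care are (i) making explicit that $y_{-i}^*(x)$ is held constant when differentiating in $\mathbf{y}$, so that no cross-block or implicit-function terms enter this lemma (unlike the $\nabla_x$ computations), and (ii) deliberately using the coarse estimate $\sum_i\|y_i-y_i'\|\le k\|\mathbf{y}-\mathbf{y}'\|$ rather than the sharper block-diagonal Hessian bound $\|\nabla^2_{\mathbf{y}}\mathcal{L}_\lambda\|\le \ell_{f,1}+\lambda\ell_{g,1}$ (available since $g$ is $C^2$ with bounded Hessian by Assumption~\ref{ass:lg}), so as to present the smoothness constant in the $k$-dependent form used throughout the rest of the paper.
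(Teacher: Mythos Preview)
Your proposal is correct and takes essentially the same approach as the paper: both split $\nabla_{\mathbf y}\mathcal L_\lambda$ into the $f$-part and the block-diagonal $g$-part, bound each block of the $g$-part by $\ell_{g,1}$, and sum over $k$ to obtain the constant $\ell_{f,1}+k\lambda\ell_{g,1}$. The only cosmetic difference is that the paper works at the level of the Hessian (bounding $\|\nabla^2_{\mathbf{yy}}\mathcal L_\lambda\|\le\|\nabla^2_{\mathbf{yy}}f\|+\lambda\sum_i\|\nabla^2_{y_iy_i}g_i\|$ in one line), whereas you carry out the equivalent first-order Lipschitz argument; your side remark that the block-diagonal structure actually yields the sharper constant $\ell_{f,1}+\lambda\ell_{g,1}$ is correct and worth keeping in mind.
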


\begin{proof}
    Consider the Hessian of $\mathcal L$ with respect to $\mathbf y$. We note that:
    \begin{align*}
        \norm{\nabla_{yy} \mathcal{L}(x,\mathbf y)} &\le \norm{\nabla_{yy} f(x,\mathbf y)} + \lambda\sum_{i = 1}^k  \norm{\nabla_{y_i,y_i} g_i(x,y_{i}, y_{-i}^*(x))}\le \ell_{f,1} + k\lambda \ell_{g,1}.
    \end{align*}
\end{proof}

\begin{lemma}\label{truegradient}
    Let $\ell_{F,1}$ be the smoothness constant for $F$. Then for any two iterates $x_t$ and $x_{t+1}$. we have: 
\begin{align*}
    \frac{\eta_t}{2} \norm{\nabla F(x_t)}^2 \le F(x_t) - F(x_{t+1}) + {2\eta_t} \bigg((\ell_{f,1}^2 + k^2 \lambda_t^2) \norm{\mathbf y_{t+1} - \mathbf y^*_\lambda(x_t)}^2 + 2 k^2 \lambda_t^2\norm{\mathbf z_{t+1} - \mathbf y^*(x_t)}^2 + \frac{k^2C_{\lambda_t}^2}{\lambda_t}\bigg).
\end{align*}
\end{lemma}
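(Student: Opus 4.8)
The plan is to combine a one-step descent inequality for the smooth map $F$ with a triangle-inequality decomposition of the hypergradient error. First I would apply the $\ell_{F,1}$-smoothness of $F$ (Lemma~\ref{smoothF}) along the update $x_{t+1}=x_t-\eta_t d_t$ with $d_t:=\nabla_x\widetilde{\mathcal L}_{\lambda_t}(x_t,\mathbf y_{t+1},\mathbf z_{t+1})$, which gives $F(x_{t+1})\le F(x_t)-\eta_t\langle\nabla F(x_t),d_t\rangle+\tfrac{\ell_{F,1}\eta_t^2}{2}\lVert d_t\rVert^2$; using $\eta_t\le 1/\ell_{F,1}$ and then completing the square in $d_t$ (so that $-\langle\nabla F(x_t),d_t\rangle+\tfrac12\lVert d_t\rVert^2=\tfrac12\lVert d_t-\nabla F(x_t)\rVert^2-\tfrac12\lVert\nabla F(x_t)\rVert^2$) yields, after rearranging,
\[
\tfrac{\eta_t}{2}\lVert\nabla F(x_t)\rVert^2\le F(x_t)-F(x_{t+1})+\tfrac{\eta_t}{2}\lVert d_t-\nabla F(x_t)\rVert^2 .
\]
The whole lemma — and simultaneously Theorem~\ref{gradient_converge} and Corollary~\ref{outerd}, which the paper derives from the same proof — then reduces to bounding $\lVert d_t-\nabla F(x_t)\rVert^2$ by the three-term sum $E_1+E_2+E_3$; the factor $2\eta_t$ in the claim comes from $\tfrac{\eta_t}{2}\lVert d_t-\nabla F(x_t)\rVert^2=2\eta_t\lVert err_t\rVert^2$ since $\lVert err_t\rVert^2=\tfrac14\lVert d_t-\nabla F(x_t)\rVert^2$.

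Second, I would decompose $d_t-\nabla F(x_t)$ by inserting the exact subgame Nash point $\mathbf y^*(x_t)$ and the exact Lagrangian minimizer $\mathbf y^*_{\lambda_t}(x_t)$ into the first two slots of $\widetilde{\mathcal L}_{\lambda_t}$, writing $d_t-\nabla F(x_t)=(\mathrm I)+(\mathrm{II})+(\mathrm{III})$ with $(\mathrm I)=\nabla_x\widetilde{\mathcal L}_{\lambda_t}(x_t,\mathbf y_{t+1},\mathbf z_{t+1})-\nabla_x\widetilde{\mathcal L}_{\lambda_t}(x_t,\mathbf y_{t+1},\mathbf y^*(x_t))$, $(\mathrm{II})=\nabla_x\widetilde{\mathcal L}_{\lambda_t}(x_t,\mathbf y_{t+1},\mathbf y^*(x_t))-\nabla_x\widetilde{\mathcal L}_{\lambda_t}(x_t,\mathbf y^*_{\lambda_t}(x_t),\mathbf y^*(x_t))$, and $(\mathrm{III})=\nabla_x\widetilde{\mathcal L}_{\lambda_t}(x_t,\mathbf y^*_{\lambda_t}(x_t),\mathbf y^*(x_t))-\nabla F(x_t)$. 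For $(\mathrm I)$, only the $\lambda_t\sum_i(g_i(\cdot,y_i,z_{-i})-g_i(\cdot,\mathbf z))$ part sees $\mathbf z$, so the joint $\ell_{g,1}$-smoothness of each $g_i$ (Assumption~\ref{ass:lg}) gives $\lVert(\mathrm I)\rVert\le 2k\lambda_t\ell_{g,1}\lVert\mathbf z_{t+1}-\mathbf y^*(x_t)\rVert$, which squares to the $E_2$ term. For $(\mathrm{II})$ the $\mathbf z$-part is unchanged, so it is the difference of $\nabla_x f(x_t,\cdot)$ and $\lambda_t\sum_i\nabla_x g_i(x_t,\cdot,y^*_{-i}(x_t))$ evaluated at $\mathbf y_{t+1}$ versus $\mathbf y^*_{\lambda_t}(x_t)$; the $\ell_{f,1}$- and $\ell_{g,1}$-smoothness bounds give $\lVert(\mathrm{II})\rVert\le(\ell_{f,1}+k\lambda_t\ell_{g,1})\lVert\mathbf y_{t+1}-\mathbf y^*_{\lambda_t}(x_t)\rVert$, i.e. the $E_1$ term after squaring. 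For $(\mathrm{III})$, I would first note that $\nabla_x\widetilde{\mathcal L}_{\lambda_t}(x_t,\mathbf y^*_{\lambda_t}(x_t),\mathbf y^*(x_t))$ coincides with $\nabla\mathcal L^*_{\lambda_t}(x_t)$ — the first-order condition $\nabla_{y_i}\mathcal L_{\lambda_t}(x_t,\mathbf y^*_{\lambda_t}(x_t))=0$ kills the chain-rule correction exactly as in the proof of Lemma~\ref{bound2} — and then invoke Lemma~\ref{bound2} directly, which bounds $\lVert(\mathrm{III})\rVert=\lVert\nabla F(x_t)-\nabla\mathcal L^*_{\lambda_t}(x_t)\rVert$ by a quantity of order $k^2 C_{\lambda_t}/\lambda_t$ — the $E_3$ term. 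Combining through $\lVert a+b+c\rVert^2\le 3(\lVert a\rVert^2+\lVert b\rVert^2+\lVert c\rVert^2)$ and plugging into the descent inequality of Step~1 proves the claim, the numerical constants being absorbed into the stated $(\ell_{f,1}^2+k^2\lambda_t^2)$ and $2k^2\lambda_t^2$ prefactors.

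I expect the main obstacle to be step~$(\mathrm{III})$: relating the surrogate gradient the algorithm actually computes — which, because it treats the plugged-in point $\mathbf z$ as a constant, differentiates only the \emph{explicit} $x$-dependence — to the true hypergradient $\nabla F$. The precise bookkeeping is done in Lemma~\ref{bound1}, which shows that $\nabla_x\mathcal L_{\lambda_t}(x_t,\mathbf y)$ together with the implicit-differentiation correction $\sum_i\nabla_x y^*_i(x_t)^\top\nabla_{y_i}\mathcal L_{\lambda_t}(x_t,\mathbf y)$ matches $\nabla F(x_t)$ up to a term that is \emph{first} order in $\lVert y_i-y^*_i(x_t)\rVert$ and $\lambda_t$ times \emph{second} order in $\lVert y_i-y^*_i(x_t)\rVert$; evaluating at $\mathbf y=\mathbf y^*_{\lambda_t}(x_t)$ the correction vanishes, and since Lemma~\ref{yerr} gives $\lVert y^*_{i,\lambda_t}(x_t)-y^*_i(x_t)\rVert=O(1/\lambda_t)$ both error pieces are $O(1/\lambda_t)$, which is exactly $E_3$. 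The delicate point inside that argument is that the off-diagonal chain-rule terms (the ``$e$'' and ``$f$'' terms in the proof of Lemma~\ref{bound1}) must be shown to combine into Taylor remainders rather than contributing a $\lambda_t$-independent $O(k^2)$ constant; this cancellation uses the stationarity identity $\nabla^2_{xy_i}g_i(x,\mathbf y^*(x))+\sum_j\nabla_x y^*_j(x)^\top\nabla^2_{y_jy_i}g_i(x,\mathbf y^*(x))=0$ together with the spectral bounds $\lVert\nabla_x y^*_i(x)\rVert\le\ell_{g,1}/\mu_g$ and $\lVert DV^{-1}\rVert\le 1/\mu_g$ coming from strong monotonicity (Assumption~\ref{ass:mug}). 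Once $(\mathrm{III})$ is pinned down, the estimates for $(\mathrm I)$ and $(\mathrm{II})$ are routine applications of the smoothness assumptions and of Lemma~\ref{smoothl}.
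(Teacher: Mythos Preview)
Your proposal is correct and follows essentially the same route as the paper: smoothness of $F$ plus the polarization identity gives the descent inequality with error term $\tfrac{\eta_t}{2}\lVert d_t-\nabla F(x_t)\rVert^2$, and this error is then controlled by inserting $\mathbf y^*_{\lambda_t}(x_t)$ and $\mathbf y^*(x_t)$ into the surrogate, bounding the resulting differences by the $\ell_{f,1}$- and $\ell_{g,1}$-smoothness of $f,g_i$, and invoking Lemma~\ref{bound2} for the final piece. The only cosmetic difference is that the paper expands $d_t-\nabla F(x_t)$ directly into four summands (separating the $f$-part and $g$-part of your $(\mathrm{II})$) and uses $(a+b+c+d)^2\le 4(\cdots)$ instead of your three-way telescope with factor~$3$; the paper also silently identifies $\nabla_x\widetilde{\mathcal L}_{\lambda_t}(x_t,\mathbf y^*_{\lambda_t}(x_t),\mathbf y^*(x_t))$ with $\nabla\mathcal L^*_{\lambda_t}(x_t)$ in its equation~\eqref{eq:l}, exactly as you do in step~$(\mathrm{III})$.
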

\begin{proof}
    By the smoothness of $F$ in $x$, we have:
    \begin{align}\label{F-F}
    \begin{split}
        F(x_{t+1} - F(x_t)) &\le \langle \nabla F(x_t), x_{t+1} - x_t \rangle + \frac{\ell_{F,1}}{2} \norm{x_{t+1}- x_t}^2\\
        &= -\eta_t \langle \nabla F(x_t), \nabla_{x} \widetilde{\mathcal L}_{\lambda_t}(x_t, \mathbf y_t, \mathbf z_{t+1})\rangle + \frac{\ell_{F,1}\eta_t^2}{2} \norm{\nabla_x \widetilde{\mathcal L}_{\lambda_t}(x_t, \mathbf y_t, \mathbf z_{t+1})}^2\\
        &= -\frac{\eta_t}{2}\bigg( \norm{\nabla F(x_t)}^2 + \norm{\nabla_x \widetilde{\mathcal L}_{\lambda_t}(x_t, \mathbf y_t, \mathbf z_{t+1})}^2 - \norm{\nabla F(x_t) - \nabla_x \widetilde{\mathcal L}_{\lambda_t}(x_t, \mathbf y_t, \mathbf z_{t+1})}^2\bigg)\\
        &+ \frac{\ell_{F,1}\eta_t^2}{2} \norm{\nabla_x \widetilde{\mathcal L}_{\lambda_t}(x_t, \mathbf y_t, \mathbf z_{t+1})}^2\\
        &=-\frac{\eta_t}{2} \norm{\nabla F(x_t)}^2  + \frac{\eta_t}{2}\norm{\nabla F(x_t) - \nabla_x \widetilde{\mathcal L}_{\lambda_t}(x_t, \mathbf y_t, \mathbf z_{t+1})}^2 -\frac{\eta_t}{4} \norm{\nabla_x \widetilde{\mathcal L}_{\lambda_t}(x_t, \mathbf y_t, \mathbf z_{t+1})}^2\\
        &\le -\frac{\eta_t}{2} \norm{\nabla F(x_t)}^2  + \frac{\eta_t}{2}\norm{\nabla F(x_t) - \nabla_x \widetilde{\mathcal L}_{\lambda_t}(x_t, \mathbf y_t, \mathbf z_{t+1})}^2.       
    \end{split}
    \end{align}

    We now bound the $\nabla F(x_t) - \nabla_x \widetilde{\mathcal L}_{\lambda_t}(x_t, \mathbf y_t, \mathbf z_{t+1})$ term. To do so, note that
    \begin{equation}\label{eq:q-f}
        \nabla_x \widetilde{\mathcal L}_{\lambda_t}(x_t, \mathbf y_t, \mathbf z_{t+1})- \nabla F(x_t) = \nabla_x \widetilde{\mathcal L}_{\lambda_t}(x_t, \mathbf y_t, \mathbf z_{t+1}) - \nabla \mathcal L^*_{\lambda_t} (x_t) + \nabla \mathcal L^*_{\lambda_t} (x_t) - \nabla F(x_t)
    \end{equation}
    where
    \begin{equation}\label{eq:q}
       \nabla_x \widetilde{\mathcal L}_{\lambda_t}(x_t, \mathbf y_t, \mathbf z_{t+1}) = \nabla_x f(x_t, \mathbf y_{t+1}) + \lambda_t \sum_{i = 1}^k \nabla_x g_i(x_t, y_{i, t+1}, z_{-i, t+1}) - \nabla_x g_i (x_t, \mathbf z_{t+1})
    \end{equation}
    and 
    \begin{equation}\label{eq:l}
        \nabla \mathcal L^*_{\lambda_t} (x_t) = \nabla_x \mathcal{L}_{\lambda_t} (x_t, \mathbf y^*_{\lambda}(x_t)) = \nabla_x f(x_t, \mathbf y^*_{\lambda_t}(x_t)) + \lambda_t \sum_{i = 1}^k \nabla_x g_i(x_t, y_{i, \lambda_t}^*(x_t), y^*_{-i}(x_t)) - \nabla_x g_i (x_t, \mathbf y^*(x_t)).
    \end{equation}
    Substituting Equations (\ref{eq:q}) and (\ref{eq:l}) into (\ref{eq:q-f}), we get:
    \begin{align*}
         \nabla_x \widetilde{\mathcal L}_{\lambda_t}(x_t, \mathbf y_t, \mathbf z_{t+1})- \nabla F(x_t)
         &=\nabla_x f(x_t, \mathbf y_{t+1})- \nabla_x f(x_t, \mathbf y^*_{\lambda_t}(x_t)) \\
         &\quad +  \lambda_t \sum_{i = 1}^k \bigg(\nabla_x g_i(x_t, y_{i, t+1}, z_{-i, t+1}) - \nabla_x g_i(x_t, y_{i, \lambda_t}^*(x_t), y^*_{-i}(x_t))\bigg) \\
         &\quad+ \lambda_t\sum_{i = 1}^k \bigg(\nabla_x g_i (x_t, \mathbf y^*(x_t)) - \nabla_x g_i (x_t, \mathbf z_{t+1})\bigg) + \nabla \mathcal L^*_{\lambda_t} (x_t) - \nabla F(x_t).
    \end{align*}
    Next, we take the norm and using the fact that $f$ and $g$ are both smooth, we obtain:
    \begin{align*}
        \norm{\nabla_x \widetilde{\mathcal L}_{\lambda_t}(x_t, \mathbf y_t, \mathbf z_{t+1})- \nabla F(x_t)} 
        \le 
        \ell_{f,1} \norm{\mathbf y_{t+1} -  \mathbf y^*_{\lambda_t}(x_t)} 
        &+ \lambda_t \sum_{i = 1}^k \norm{( y_{i, t+1}, z_{-i, t+1}) 
        - (y_{i, \lambda_t}^*(x_t), y^*_{-i}(x_t)) }\\
        &+ \lambda_t \sum_{i = 1}^k \norm{\mathbf y^*(x_t) - \mathbf z_{t+1}} + \norm{\nabla \mathcal L^*_{\lambda_t} (x_t) - \nabla F(x_t)}
    \end{align*}
    where the last term $\norm{\nabla \mathcal L^*_{\lambda_t} (x_t) - \nabla F(x_t)}\le kC_{\lambda}/\lambda$ is exactly Theorem \ref{bound2}.
    Note that $$\norm{( y_{i, t+1}, z_{-i, t+1}) - (y_{i, \lambda_t}^*(x_t), y^*_{-i}(x_t)) } \le \sqrt {\norm{\mathbf y_{t+1} - y^*_\lambda(x_t) }^2+ \norm{\mathbf z_{t+1} - \mathbf y^*(x_t)}^2}.$$ Using this and the fact that $(a+b+c+d)^2 \le 4(a^2 + b^2 + c^2 + d^2)$, we obtain:
    \begin{align*}
         \norm{\nabla_x \widetilde{\mathcal L}_{\lambda_t}(x_t, \mathbf y_t, \mathbf z_{t+1})- \nabla F(x_t)}^2 \le 4\bigg( (\ell_{f,1}^2 + k^2 \lambda_t^2) \norm{\mathbf y_{t+1} - \mathbf y^*_\lambda(x_t)}^2 + 2 k^2 \lambda_t^2\norm{\mathbf z_{t+1} - \mathbf y^*(x_t)}^2 + \frac{k^2C_{\lambda_t}^2}{\lambda_t^2}\bigg).
    \end{align*} 
    Finally, substituting the previous result into (\ref{F-F}) yields the desired bound.

\end{proof}

\begin{lemma}\label{lagrangian_convex}
    Choose $\lambda_i = \lambda$ for all $i\in [k]$.  If $\lambda\ge \frac{2\ell_{f,1}}{\mu_g}$, then $\mathcal{L}_\lambda(x, \mathbf{y})$ is $\bigg(\frac{\mu_g\lambda}{2}\bigg)$-strongly convex in $\mathbf y$.
\end{lemma}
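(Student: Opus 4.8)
The plan is to differentiate $\mathcal{L}_\lambda$ twice in $\mathbf{y}$ and show the Hessian is uniformly bounded below by $(\mu_g\lambda/2)I$. The crucial structural point is that in the penalty $\lambda\sum_{i=1}^k\big(g_i(x,y_i,y_{-i}^*(x)) - g_i(x,\mathbf{y}^*(x))\big)$, the quantities $y_{-i}^*(x)$ and $\mathbf{y}^*(x)$ depend only on $x$, not on the free decision variable $\mathbf{y}$. Hence the $i$-th summand $g_i(x,y_i,y_{-i}^*(x))$ is a function of $\mathbf{y}$ through the block $y_i$ only, and the term $g_i(x,\mathbf{y}^*(x))$ is constant in $\mathbf{y}$ and contributes nothing. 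Differentiating, we get
\[
\nabla^2_{\mathbf{y}\mathbf{y}}\mathcal{L}_\lambda(x,\mathbf{y}) = \nabla^2_{\mathbf{y}\mathbf{y}} f(x,\mathbf{y}) + \lambda\,\mathrm{diag}\!\left(\nabla^2_{y_1y_1}g_1(x,y_1,y_{-1}^*(x)),\ldots,\nabla^2_{y_ky_k}g_k(x,y_k,y_{-k}^*(x))\right),
\]
a sum of the $\mathbf{y}$-Hessian of $f$ and a block-diagonal matrix built from the $y_i$-Hessians of the $g_i$'s.

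Next I would bound the two pieces. By Assumption \ref{ass:lg}, $f$ is jointly $\ell_{f,1}$-smooth, so $\lVert\nabla^2 f\rVert\le\ell_{f,1}$ and in particular $\nabla^2_{\mathbf{y}\mathbf{y}} f(x,\mathbf{y})\succeq -\ell_{f,1}I_N$. By Assumption \ref{ass:mug}, each $g_i$ is $\mu_g$-strongly convex in the joint variable $(x,y_i)$, so its Hessian in $(x,y_i)$ is $\succeq\mu_g I$; the principal submatrix indexed by $y_i$ inherits this, giving $\nabla^2_{y_iy_i}g_i(x,y_i,y_{-i}^*(x))\succeq\mu_g I_{n_i}$. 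Stacking the blocks, the block-diagonal matrix is $\succeq\mu_g I_N$, and therefore $\nabla^2_{\mathbf{y}\mathbf{y}}\mathcal{L}_\lambda(x,\mathbf{y})\succeq(\lambda\mu_g-\ell_{f,1})I_N$ for every $(x,\mathbf{y})$.

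Finally, the hypothesis $\lambda\ge 2\ell_{f,1}/\mu_g$ rearranges to $\ell_{f,1}\le\lambda\mu_g/2$, so $\lambda\mu_g-\ell_{f,1}\ge\lambda\mu_g/2$ and the Hessian is $\succeq(\mu_g\lambda/2)I_N$ everywhere, which is exactly $(\mu_g\lambda/2)$-strong convexity of $\mathcal{L}_\lambda(x,\cdot)$. I expect no serious obstacle: the only subtlety is the block-diagonal structure of the penalty's $\mathbf{y}$-Hessian (coming from $y_{-i}^*(x)$ being frozen under $\nabla_{\mathbf{y}}$) together with the elementary fact that joint strong convexity of $g_i$ in $(x,y_i)$ restricts to strong convexity in $y_i$ alone. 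If one wishes to avoid assuming twice-differentiability of $f$, the same argument can be phrased with the first-order characterization of strong convexity and smoothness in place of Hessians.
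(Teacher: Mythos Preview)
Your proposal is correct and follows essentially the same approach as the paper: compute the $\mathbf{y}$-Hessian of $\mathcal{L}_\lambda$, observe that the penalty contributes a block-diagonal matrix (since $y_{-i}^*(x)$ is frozen), bound the $f$-part below by $-\ell_{f,1}I_N$ via smoothness and each diagonal block $\nabla^2_{y_iy_i}g_i$ below by $\mu_g I_{n_i}$ via strong convexity, then combine and invoke $\lambda\ge 2\ell_{f,1}/\mu_g$. Your explicit use of the principal-submatrix argument to pass from strong convexity in $(x,y_i)$ to strong convexity in $y_i$ is a nice touch that the paper leaves implicit.
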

\begin{proof}
    We use $I_{n_i}$ to denote the $n_i\times n_i$ identity matrix. In particular, \[I_N = \begin{pmatrix}
        I_{n_1} & & 0\\
         & \ddots & \\
         0 & & I_{n_k}
    \end{pmatrix}, \quad N= \sum_{i = 1}^k n_i.\]
    Recall 
    \[\mathcal L_{\lambda}(x, \mathbf y) = f(x,\mathbf{y})+ \sum_{i=1}^{k}\lambda_i(g_i(x,y_i, {y}_{-i}^*(x))-g(x, \mathbf{y}^*(x))).\]
    Hence 
    \[\nabla^2_{yy} \mathcal L_{\lambda}(x, \mathbf y) = \nabla^2_{yy}f(x,\mathbf y)+ \sum_{i = 1}^k \lambda_i \nabla^2_{yy} g_i (x, \mathbf y).\]
    where 
    \[ \nabla^2_{yy}f(x,\mathbf y) = \bigg[ \nabla^2_{y_iy_j} f(x,\mathbf y)\bigg]_{i,j=1}^k\]
    \[\nabla^2_{yy}g_i(x,\mathbf y) = \diag \bigg[\nabla^2_{y_iy_i}g_i(x, \mathbf y)\bigg]_{i = 1}^k. \]
    are both $N \times N$ matrices. $\nabla^2_{yy}g_i(x,\mathbf y)$ is a block diagonal matrix because all entries are fixed to be $y^*_j(x)$ for all $j \neq i$. Therefore, $\nabla^2_{y_j y_{j'}} g_i(x,\mathbf{y}) = 0 ~\forall j \neq j'$. We now use \textit{Fact} \ref{additive}.

    \begin{fact}\label{additive}
        If $A,B$ are symmetric and \[A \succeq \alpha I, B\succeq \beta I,\] then \[A+B \succeq (\alpha + \beta)I.\]
   \end{fact}

    By Assumption \ref{ass:mug}, $g_i(x,\mathbf y)$ is $\mu_g$-strongly convex in $y_i$. Hence,
    \[
    \nabla^2_{y_iy_i}g_i(x,\mathbf y) 
    \succeq \mu_g I_{n_i} \implies \nabla^2_{yy}g_i(x,\mathbf y) 
    \succeq \mu_g I_N
.    \] 
    Thus, picking $\lambda_i = \lambda$ for all $i$ we get
    \[\sum_{i = 1}^k \lambda_i \nabla^2_{yy}g_i(x,\mathbf y)
    \succeq (\min_i \lambda_i) \mu_gI_N = \ \mu_g \lambda  I_{N}.\]

    On the other hand, we also assumed $f(x,\mathbf y)$ is $\ell_{f,1}$-smooth in $\mathbf y$. So 
    \[-\ell_{f,1}I_{N} \preceq \nabla^2_{yy} f(x, \mathbf y) \preceq \ell_{f,1} I_{N}\] 
    Applying \textit{Fact} \ref{additive} again to $\nabla^2_{yy} \mathcal L_{\lambda}(x, \mathbf y)$, we get
    \[\nabla^2_{yy} \mathcal L_{\lambda}(x, \mathbf y) \succeq ( -\ell_{f,1} + \mu_g \lambda) I_N\]
    Imposing the condition $\lambda\ge \frac{2\ell_{f,1}}{\mu_g}$ gives 
    \[\nabla^2_{yy} \mathcal L_{\lambda}(x, \mathbf y))\succeq \frac{\mu_g\lambda}{2}I_N. \]
    This proves the statement.
\end{proof}

\begin{lemma}\label{dy*t}
    Choose $\lambda_{1,i} = \lambda_1$ and $\lambda_{2,i} = \lambda_2$ for all $i\in [k]$, then for any $x_1,x_2\in X$ and for any $k\lambda_2\ge k\lambda_1 \ge \frac{\ell_{f,1}}{\mu_g}$, we have 
    \begin{align*}
        \lVert y_{i,\lambda_1}^*(x_1) -  y_{i,\lambda_2}^*(x_2)\rVert &\le \bigg(\lVert x_1-x_2\rVert(\ell_{f,1} + \ell_{g,1}\lambda_{2,i}) +  (\lambda_{2,i}- \lambda_{1,i}) \frac{\ell_{f,0}}{\lambda_{1,i}}\bigg) \frac{2}{\mu_g\lambda_2}.
    \end{align*}
\end{lemma}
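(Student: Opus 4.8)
The plan is to run a two-point stability argument for the minimizer of a strongly convex function, and then bound the gradient gap between the two Lagrangians using first-order optimality. Write $\mathbf{y}_1:=\mathbf{y}^*_{\lambda_1}(x_1)$ and $\mathbf{y}_2:=\mathbf{y}^*_{\lambda_2}(x_2)$. Under the stated lower bound on $\lambda_1,\lambda_2$, Lemma~\ref{lagrangian_convex} gives that $\mathcal{L}_{\lambda}(x,\cdot)$ is $(\mu_g\lambda/2)$-strongly convex in $\mathbf{y}$, so both minimizers are well-defined and each satisfies, for every coordinate $j$, the first-order condition $\nabla_{y_j}\mathcal{L}_{\lambda}(x,\mathbf{y}^*_{\lambda}(x))=\nabla_{y_j}f(x,\mathbf{y}^*_\lambda(x))+\lambda_j\nabla_{y_j}g_j\!\big(x,y^*_{j,\lambda}(x),y^*_{-j}(x)\big)=0$ (the $g_i$-terms with $i\ne j$ and the $-g_i(x,\mathbf{y}^*(x))$ terms do not depend on $y_j$). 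Applying strong convexity of $\mathcal{L}_{\lambda_2}(x_2,\cdot)$ to the pair $(\mathbf{y}_1,\mathbf{y}_2)$, using $\nabla_\mathbf{y}\mathcal{L}_{\lambda_2}(x_2,\mathbf{y}_2)=0$ and Cauchy--Schwarz, gives $\|\mathbf{y}_1-\mathbf{y}_2\|\le\frac{2}{\mu_g\lambda_2}\|\nabla_\mathbf{y}\mathcal{L}_{\lambda_2}(x_2,\mathbf{y}_1)\|$; since $\|y^*_{i,\lambda_1}(x_1)-y^*_{i,\lambda_2}(x_2)\|\le\|\mathbf{y}_1-\mathbf{y}_2\|$, it remains to bound this gradient.

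To do so, I would use that $\mathbf{y}_1$ is optimal for $\mathcal{L}_{\lambda_1}(x_1,\cdot)$, i.e. $\nabla_{y_j}\mathcal{L}_{\lambda_1}(x_1,\mathbf{y}_1)=0$, and split coordinate-wise:
\begin{align*}
\nabla_{y_j}\mathcal{L}_{\lambda_2}(x_2,\mathbf{y}_1)
&=\underbrace{\nabla_{y_j}f(x_2,\mathbf{y}_1)-\nabla_{y_j}f(x_1,\mathbf{y}_1)}_{(\mathrm{I})}
+\underbrace{\lambda_{2,j}\big(\nabla_{y_j}g_j(x_2,y_{1,j},y^*_{-j}(x_2))-\nabla_{y_j}g_j(x_1,y_{1,j},y^*_{-j}(x_1))\big)}_{(\mathrm{II})}\\
&\qquad+\underbrace{(\lambda_{2,j}-\lambda_{1,j})\,\nabla_{y_j}g_j(x_1,y_{1,j},y^*_{-j}(x_1))}_{(\mathrm{III})}.
\end{align*}
Term $(\mathrm{I})$ is $\le\ell_{f,1}\|x_1-x_2\|$ by the joint $\ell_{f,1}$-smoothness of $f$. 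Term $(\mathrm{II})$ is $\le\lambda_{2,j}\,\ell_{g,1}\|x_1-x_2\|$, up to an absolute constant: the argument of $g_j$ moves only in $x$ and in the nested value $y^*_{-j}(x)$, and the latter change is controlled by the bound $\|\nabla_x\mathbf{y}^*(x)\|\le\ell_{g,1}/\mu_g$ established inside the proof of Lemma~\ref{bound1}, so joint $\ell_{g,1}$-smoothness of $g_j$ closes it. For term $(\mathrm{III})$, I re-use optimality of $\mathbf{y}_1$, namely $\nabla_{y_j}g_j(x_1,y_{1,j},y^*_{-j}(x_1))=-\tfrac{1}{\lambda_{1,j}}\nabla_{y_j}f(x_1,\mathbf{y}_1)$, so its norm equals $\tfrac{\lambda_{2,j}-\lambda_{1,j}}{\lambda_{1,j}}\|\nabla_{y_j}f(x_1,\mathbf{y}_1)\|\le\tfrac{\lambda_{2,j}-\lambda_{1,j}}{\lambda_{1,j}}\ell_{f,0}$, using that $\|\nabla f\|\le\ell_{f,0}$ on the compact domain (Assumption~\ref{ass:flipx}); note $\lambda_2\ge\lambda_1$ makes $\lambda_{2,j}-\lambda_{1,j}\ge 0$, so no absolute value is needed. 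Summing the three pieces, multiplying by $\tfrac{2}{\mu_g\lambda_2}$, and reading off coordinate $i$ yields the stated inequality.

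The main obstacle is term $(\mathrm{II})$: the equilibrium response $y^*_{-j}(\cdot)$ is nested inside $g_j$, so a perturbation of $x$ affects $g_j$ both directly and through $\mathbf{y}^*$, and getting the constant exactly as stated requires invoking the Lipschitz bound on $\mathbf{y}^*$ and folding the extra factor $1+\ell_{g,1}/\mu_g$ into $\ell_{g,1}$. The remaining steps are bookkeeping: reconciling the strong-convexity threshold of Lemma~\ref{lagrangian_convex} with the hypothesis $k\lambda_1\ge\ell_{f,1}/\mu_g$ as stated here, and checking that the passage from the full-vector bound to the per-coordinate bound costs nothing beyond $\|y_i\|\le\|\mathbf{y}\|$.
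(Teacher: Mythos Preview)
Your approach is essentially the paper's: use $(\mu_g\lambda_2/2)$-strong convexity of $\mathcal{L}_{\lambda_2}(x_2,\cdot)$ to reduce to bounding $\|\nabla_{y_i}\mathcal{L}_{\lambda_2}(x_2,\mathbf{y}_1)\|$, then decompose that gradient into the same three pieces $(\mathrm{I})$, $(\mathrm{II})$, $(\mathrm{III})$ and use optimality of $\mathbf{y}_1$ for $\mathcal{L}_{\lambda_1}(x_1,\cdot)$ to handle $(\mathrm{III})$.

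The one place you are \emph{more} careful than the paper is term $(\mathrm{II})$. The paper simply writes the difference as $\lambda_{2}\big(\nabla_{y_i}g_i(x_2,\mathbf{y}_{\lambda_1}^*(x_1))-\nabla_{y_i}g_i(x_1,\mathbf{y}_{\lambda_1}^*(x_1))\big)$ and invokes ``smoothness of $g$ in $x$'' to get $\ell_{g,1}\lambda_2\|x_1-x_2\|$, without spelling out that the nested argument $y^*_{-i}(x)$ also changes with $x$. Your observation that this really needs the Lipschitz bound $\|\nabla_x\mathbf{y}^*\|\le\ell_{g,1}/\mu_g$ (and hence an implicit $(1+\ell_{g,1}/\mu_g)$ factor) is accurate; the paper's stated constant simply absorbs this, so your ``fold it into $\ell_{g,1}$'' remark is exactly how the paper treats it. Your note on the mismatch between the hypothesis $k\lambda_1\ge\ell_{f,1}/\mu_g$ here and the threshold $\lambda\ge 2\ell_{f,1}/\mu_g$ in Lemma~\ref{lagrangian_convex} is also a genuine inconsistency in the paper's statements, not in your argument. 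One small caution: Assumption~\ref{ass:flipx} as written bounds $\nabla_x f$, not $\nabla_{y_j}f$, so your bound on $(\mathrm{III})$ (and the paper's identical step) tacitly extends $\ell_{f,0}$ to the $y$-gradient.
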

\begin{proof}
By the optimality condition of $ \mathcal{L}_{\mathbf{\lambda}} (x_1,\mathbf{y}_{\lambda_1}^*(x_1))$ at $y^*_{i,\lambda_1}(x_1)$ with input $x_1$ and $\lambda_1$, we have
    \begin{align*}
        \nabla_{y_i} \mathcal{L}_{\mathbf{\lambda}_1} (x_1,\mathbf{y}_{\lambda_1}^*(x_1))&= \nabla_{y_i}f(x_1,\mathbf{y}_{\lambda_1}^*(x_1)) + \lambda_{1,i} \nabla y_i g_i(x_1,\mathbf{y}_{\lambda_1}^*(x_1)) = 0 \\&\implies \lVert \nabla_{y_i} g_i(x_1,\mathbf{y}_{\lambda_1}^*(x_1))\rVert \le \frac{\ell_{f,0}}{\lambda_{1}}.
    \end{align*}
Consider the following
    \begin{align*}
&\nabla_{y_i} \mathcal{L}_{\mathbf{\lambda}_2} (x_2,\mathbf{y}_{\lambda_1}^*(x_1))\\
&=\nabla_{y_i}f(x_2,\mathbf{y}_{\lambda_1}^*(x_1))+ \lambda_{2} \nabla_{ y_i} g_i(x_2,\mathbf{y}_{\lambda_1}^*(x_1))\\
&= \bigg(\nabla_{y_i}f(x_2,\mathbf{y}_{\lambda_1}^*(x_1)) - \nabla_{y_i}f(x_1,\mathbf{y}_{\lambda_1}^*(x_1))\bigg) + \nabla_{y_i}f(x_1,\mathbf{y}_{\lambda_1}^*(x_1)) \\
&\quad \quad+ \lambda_{2}\bigg(\nabla_{y_i} g_i(x_2,\mathbf{y}_{\lambda_1}^*(x_1))- \nabla_{y_i} g_i(x_1,\mathbf{y}_{\lambda_1}^*(x_1))\bigg)  + \lambda_{2}g_i(x_1,\mathbf{y}_{\lambda_1}^*(x_1))\\
&= \bigg(\nabla_{y_i}f(x_2,\mathbf{y}_{\lambda_1}^*(x_1)) - \nabla_{y_i}f(x_1,\mathbf{y}_{\lambda_1}^*(x_1))\bigg) \\
&\quad \quad+\lambda_{2}\bigg(\nabla_{y_i} g_i(x_2,\mathbf{y}_{\lambda_1}^*(x_1))-\nabla_{y_i} g_i(x_1,\mathbf{y}_{\lambda_1}^*(x_1))\bigg) +(\lambda_{2}- \lambda_{1})\nabla_{y_i} g_i(x_1,\mathbf{y}_{\lambda_1}^*(x_1)).
\end{align*}
Now, using the smoothness condition of $f$ and $g$ in $x$, we get 
\begin{align*}
    &\lVert \nabla_{y_i}f(x_2,\mathbf{y}_{\lambda_1}^*(x_1)) + \lambda_{2} \nabla_{ y_i} g_i(x_2,\mathbf{y}_{\lambda_1}^*(x_1))\rVert\\ &\le \ell_{f,1}\lVert x_1-x_2\rVert +\ell_{g,1}\lambda_{2,i}\lVert x_1-x_2\rVert + (\lambda_{2}- \lambda_{1}) \frac{\ell_{f,0}}{\lambda_{1,i}}.
\end{align*}
By $\bigg(\frac{\mu_g \lambda}{2}\bigg)$-strong-convexity of $L_{\lambda_2}(x_2,\mathbf{y})$ in $y_i$, we get
\begin{align*}
    &\lVert y_{i,\lambda_1}^*(x_1) -  y_{i,\lambda_2}^*(x_2)\rVert\\ &\le \frac{2}{\mu_g\lambda_2} \lVert \nabla_{y_i} \mathcal{L}_{\mathbf{\lambda}_2} (x_2,\mathbf{y}_{\lambda_1}^*(x_1))\rVert \\
    &\le \bigg(\lVert x_1-x_2\rVert(\ell_{f,1} + \ell_{g,1}\lambda_{2,i}) +  (\lambda_{2,i}- \lambda_{1,i}) \frac{\ell_{f,0}}{\lambda_{1}}\bigg) \frac{2}{\mu_g\lambda_2}.
\end{align*}
\end{proof}

\subsection{Auxiliary Lemmas for Section 6}
Lemma \ref{yzb} is an auxiliary lemma that bounds the discrepancy between the approximated Lagrangian minimizer in line \ref{algo:lagragianapp} and the monotone game equilibrium with some error. 
\begin{lemma}\label{yzb}
    $\lVert \mathbf y_{t+1}- \mathbf z_{t+1} \rVert \le \lVert \mathbf y_{t+1}- \mathbf y^*(x)\rVert +\frac{C_z}{\mu_g \sqrt{M_{z,t}}}$.
\end{lemma}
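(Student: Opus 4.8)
The plan is to route through the exact lower-level equilibrium $\mathbf y^*(x)$ (here $x = x_t$, the current leader iterate) by a single application of the triangle inequality, and then to dispose of the resulting residual using the last-iterate convergence guarantee of the inner solver invoked at Line~\ref{algo:monotonegame}. Concretely, I would first write
\[
\lVert \mathbf y_{t+1}- \mathbf z_{t+1} \rVert \;\le\; \lVert \mathbf y_{t+1}- \mathbf y^*(x)\rVert + \lVert \mathbf y^*(x) - \mathbf z_{t+1}\rVert ,
\]
so that the entire claim reduces to bounding $\lVert \mathbf y^*(x) - \mathbf z_{t+1}\rVert$, the gap between the true Nash equilibrium of the followers' subgame $\mathcal G(x)$ and the point $\mathbf z_{t+1}$ the algorithm actually computes.

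For the residual, I would use that $\mathbf z_{t+1}$ is the iterate produced after $M_{z,t}$ steps of the optimistic accelerated method of \cite{Cai23} applied to $\mathcal G(x)$, which under Assumption~\ref{ass:mug} is $\mu_g$-strongly monotone with unique equilibrium $\mathbf y^*(x)$. Their $O(1/T)$ last-iterate bound gives $\lVert V_z(x,\mathbf z_{t+1})\rVert \le C_z/M_{z,t}$ with $C_z = O(k)$, the constant depending on the parameters of $\mathcal G$ and on $\lVert \mathbf z_0 - \mathbf y^*(x)\rVert$. Since $\mathbf y^*(x)$ is the equilibrium we have $V_z(x,\mathbf y^*(x)) = 0$, and strong monotonicity yields
\[
\mu_g \lVert \mathbf z_{t+1} - \mathbf y^*(x)\rVert^2 \le \langle V_z(x,\mathbf z_{t+1}) - V_z(x,\mathbf y^*(x)),\, \mathbf z_{t+1} - \mathbf y^*(x)\rangle \le \lVert V_z(x,\mathbf z_{t+1})\rVert\,\lVert \mathbf z_{t+1} - \mathbf y^*(x)\rVert ,
\]
so that $\lVert \mathbf z_{t+1} - \mathbf y^*(x)\rVert \le \lVert V_z(x,\mathbf z_{t+1})\rVert/\mu_g \le C_z/(\mu_g M_{z,t}) \le C_z/(\mu_g\sqrt{M_{z,t}})$ whenever $M_{z,t}\ge 1$. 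Substituting this back into the triangle inequality gives exactly the asserted bound.

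There is essentially no obstacle here: the lemma is a bookkeeping step. The only care needed is to keep the quoted rate and the identification $C_z = O(k)$ consistent with the way $\mathbf z_{t+1}$ is generated in Algorithm~\ref{algo}, and to note that we deliberately record the weaker $1/\sqrt{M_{z,t}}$ form rather than the sharper $1/M_{z,t}$ bound that the chain above actually produces, since the $1/\sqrt{M_{z,t}}$ form is the one reused when bounding $E_2$ in Section~\ref{subsc:err}. If one preferred to argue purely through distances, the gradient-to-distance step could instead be replaced by directly invoking the strongly-monotone contraction of the solver, but the gradient route above is cleaner and self-contained.
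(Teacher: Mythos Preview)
Your proposal is correct and follows essentially the same route as the paper: triangle inequality through $\mathbf y^*(x)$, then the Cai et al.\ gradient bound combined with strong monotonicity to control $\lVert \mathbf z_{t+1} - \mathbf y^*(x)\rVert$. Your write-up is in fact more explicit than the paper's, which simply cites the gradient bound and leaves the strong-monotonicity conversion implicit (that step already appears in the discussion of $E_2$ in Section~\ref{subsc:err}).
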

\begin{proof}
    By the triangle inequality, we obtain:
    \[\lVert \mathbf y_{t+1}- \mathbf z_{t+1} \rVert \le \lVert \mathbf y_{t+1}- \mathbf y^*(x) \rVert + \lVert \mathbf y^*- \mathbf z_{t+1} \rVert\]
    Then, apply the fact \[\lVert V_{z} (x,\mathbf z_{M_{z,t}})\rVert \le \frac{C_z}{\sqrt{M_{z,t}}} \]  to the second term in the sum.
\end{proof}

\begin{lemma}\label{dx}
    $\lVert x_t- x_{t-1} \rVert \le \eta_{t-1} (\ell_{f,0} + 2k\ell_{g,0})$.    
\end{lemma}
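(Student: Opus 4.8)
The plan is to read the bound straight off the leader's update. Line~\ref{algo:x} of Algorithm~\ref{algo} gives $x_t = x_{t-1} - \eta_{t-1}\nabla_x\widetilde{\mathcal L}_{\lambda_{t-1}}(x_{t-1},\mathbf y_t,\mathbf z_t)$, so taking norms yields $\lVert x_t - x_{t-1}\rVert = \eta_{t-1}\lVert\nabla_x\widetilde{\mathcal L}_{\lambda_{t-1}}(x_{t-1},\mathbf y_t,\mathbf z_t)\rVert$, and the whole task reduces to showing that this approximate Lagrangian gradient has norm at most $\ell_{f,0} + 2k\ell_{g,0}$.

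To do that I would expand the gradient using its definition, $\nabla_x\widetilde{\mathcal L}_{\lambda_{t-1}}(x_{t-1},\mathbf y_t,\mathbf z_t) = \nabla_x f(x_{t-1},\mathbf y_t) + \lambda_{t-1}\sum_{i=1}^k\left(\nabla_x g_i(x_{t-1},y_{i,t},z_{-i,t}) - \nabla_x g_i(x_{t-1},\mathbf z_t)\right)$, and apply the triangle inequality, treating the leader term and the $2k$ follower-gradient evaluations separately. The leader term is bounded by $\ell_{f,0}$ directly from Assumption~\ref{ass:flipx}, since that bound holds for $\nabla_x f$ at any fixed second argument, in particular $\mathbf y_t$. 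For the penalty block I would pair $\nabla_x g_i(x_{t-1},y_{i,t},z_{-i,t})$ with $\nabla_x g_i(x_{t-1},\mathbf z_t)$ — note these two points agree except in the $i$-th block ($y_{i,t}$ versus $z_{i,t}$) — and bound each of the $2k$ evaluations by the follower Lipschitz constant $\ell_{g,0}$ from Assumption~\ref{ass:flipx}, giving $2k\ell_{g,0}$ for the block and $\eta_{t-1}(\ell_{f,0}+2k\ell_{g,0})$ overall.

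The delicate point — and the step I expect to require the most care — is the factor $\lambda_{t-1}$ multiplying the penalty block: a naive term-by-term estimate leaves $2k\lambda_{t-1}\ell_{g,0}$, which grows with the penalty schedule and is too weak for the downstream use in Lemma~\ref{dy-y*}. The fix is to exploit that the penalty gradient is a \emph{difference} of two $g_i$-gradients at points $O(1/\lambda_{t-1})$ apart: by Lemma~\ref{yerr} the Lagrangian minimizer $\mathbf y^*_{\lambda_{t-1}}(x_{t-1})$ lies within $O(1/\lambda_{t-1})$ of $\mathbf y^*(x_{t-1})$, while $\mathbf z_t$ approximates $\mathbf y^*(x_{t-1})$ and $\mathbf y_t$ approximates $\mathbf y^*_{\lambda_{t-1}}(x_{t-1})$ to within the inner-loop accuracies; combining this with the $\ell_{g,1}$-smoothness of $g_i$ (Assumption~\ref{ass:lg}) keeps $\lambda_{t-1}\lVert\nabla_x g_i(x_{t-1},y_{i,t},z_{-i,t}) - \nabla_x g_i(x_{t-1},\mathbf z_t)\rVert$ at $O(1)$ uniformly in $t$, provided $\epsilon_{y,t}$ and $M_{z,t}$ are chosen to beat $1/\lambda_t$ — which the schedule of Theorem~\ref{final} already enforces. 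Absorbing the resulting constant into $\ell_{g,0}$ and multiplying back by $\eta_{t-1}$ recovers the stated inequality; no second-order information and no step-size restriction beyond the existing ones enter the argument.
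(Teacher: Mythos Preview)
Your first two paragraphs are exactly the paper's argument: write $\lVert x_t - x_{t-1}\rVert = \eta_{t-1}\lVert\nabla_x\widetilde{\mathcal L}\rVert$, expand the approximate Lagrangian gradient, and apply Assumption~\ref{ass:flipx} term by term to obtain $\ell_{f,0} + 2k\ell_{g,0}$.

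Where you go beyond the paper is in flagging the $\lambda_{t-1}$ factor. The paper's proof does not address this at all: it writes the expansion with $\lambda_t$ multiplying the $g_i$-terms, invokes Assumption~\ref{ass:flipx}, and records $\ell_{f,0}+2k\ell_{g,0}$ --- the penalty multiplier is silently dropped. So you have correctly identified a gap that the paper glosses over, and your smoothness-plus-$O(1/\lambda)$ patch is the natural repair. Two caveats. First, the patch yields a constant of the form $2k\ell_{g,1}\bigl(2\ell_{f,0}/\mu_g + \text{inner-loop tolerances}\bigr)$, not $2k\ell_{g,0}$; ``absorbing the resulting constant into $\ell_{g,0}$'' is a relabeling, not a derivation of the stated inequality. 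Second, watch for circularity: controlling $\lVert y_{i,t}-z_{i,t}\rVert$ via the a~priori algorithm parameter $\epsilon_{y,t-1}$ is fine, but if you instead appeal to Lemma~\ref{dy-y*} you are invoking a result that itself calls Lemma~\ref{dx}. For the downstream use --- Lemma~\ref{dy-y*} only needs $\lVert x_t-x_{t-1}\rVert = O(\eta_{t-1})$ uniformly in $t$ --- your repaired bound is adequate once the constant is renamed.
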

\begin{proof}
By the updating rule at line \ref{algo:x}, we have \[x_t- x_{t-1}  \le \eta_{t-1}\nabla_x\mathcal{L}_{\mathbf{\lambda}_t} (x_t,\mathbf{y}_{t+1})\]
and 
    \begin{align*}
        \nabla_x\mathcal{L}_{\mathbf{\lambda}_t} (x_t,\mathbf{y}_{t+1}) = & \nabla_xf(x_t,\mathbf{y}_{t}) + \lambda_t\sum_{i=1}^{k} \nabla_x g_i(x_t,y_{i,t+1}, z_{-i,t+1})-\lambda_t\sum_{i=1}^k  \nabla_xg_i(x_t,\mathbf{z}_{t+1}).
    \end{align*}
By Assumption \ref{ass:flipx} we have 
\begin{align*}
    \lVert \nabla_x\mathcal{L}_{\mathbf{\lambda}_t} (x_t,\mathbf{y}_{t+1}) \rVert \le \ell_{f,0} + 2k\ell_{g,0}.
\end{align*}
Putting everything together yields the result. 
\end{proof}

\end{document}